\definecolor{vertFonce}	{rgb}{0,0.5,0}
\definecolor{numLignes}	{rgb}{0.17,0.57,0.7}	
\definecolor{gris}		{rgb}{0.5,0.5,0.5}
\definecolor{grisFonce}	{rgb}{0.2,0.2,0.2}
\definecolor{orange}	{rgb}{1,0.65,0.31}		
\definecolor{orangeFonce}{rgb}{1,0.4,0}
\definecolor{bleuFonce}	{rgb}{0,0,0.4}
\definecolor{rougeFonce}{rgb}{0.3,0,0}
\definecolor{rougeWord}	{rgb}{0.5,0,0}
\definecolor{vertClair}	{rgb}{0.8,1,0.8}
\definecolor{rougeClair}{rgb}{1,0.5,0.5}
\definecolor{violet}	{rgb}{0.5,0,0.5}
\newtheorem{lem}{Lemma}[section]
\newtheorem{theorem}{Theorem}[section]
\newtheorem{cor}{Corollary}[section]
\newtheorem{prop}{Proposition}[section]
\newtheorem{remark}{Remark}[section]
\newcommand		{\subsetArrow}	{\mathrel{\ooalign{$\subset$\cr%
\hidewidth\raise-.087ex\hbox{$_\shortrightarrow\mkern-1.5mu$}\cr}}}
\newcommand		{\subsetarrow}	{\mathrel{\ooalign{$\subset$\cr%
\hidewidth\raise-1.45ex\hbox{$\vec{}\mkern6mu$}\cr}}}
\newcommand		{\N}		{\mathbb N}			
\newcommand		{\RR}		{\mathbb R}			
\newcommand		{\R}		{\RR}
\newcommand		{\Rd}		{\R^3}
\newcommand		{\RRd}		{\R^6}
\renewcommand	{\L}		{\mathcal L}		
\newcommand		{\cW}		{\mathcal W}		
\newcommand		\sfX		{\mathsf X}			
\newcommand		\sfm		{\mathsf m}
\newcommand		{\lt}			{\left}				%
\newcommand		{\rt}			{\right}			%
\renewcommand	{\(}			{\lt(}
\renewcommand	{\)}			{\rt)}
\newcommand		{\com}[1]		{\lt[{#1}\rt]}		
\newcommand		{\n}[1]			{\lt\lvert #1 \rt\rvert}
\newcommand		{\nrm}[1]		{\lt\|{#1}\rt\|}
\newcommand		{\bnrm}[1]		{\big\lVert #1\big\rVert}
\newcommand		{\Nrm}[2]		{\nrm{#1}_{#2}}
\newcommand		{\bNrm}[2]		{\bnrm{#1}_{#2}}
\renewcommand		{\d}		{\mathrm{d}}		
\newcommand			{\dd}		{\,\d}				
\newcommand			{\dpt}		{\partial_t}
\newcommand			{\dt}		{\frac{\d}{\d t}}	
\newcommand			{\conj}[1]	{\overline{#1}}		
\DeclareMathOperator{\tr}		{Tr}				
\DeclareMathOperator{\diag}		{diag}
\newcommand		{\Tr}[1]		{\tr\!\( #1 \)}		
\newcommand		{\Diag}[1]		{\diag\!\( #1 \)}
\newcommand		{\intd}			{\int_{\Rd}}
\newcommand		{\iintd}		{\iint_{\RRd}}
\newcommand		{\sumj}			{\sum_{j\in J}}
\newcommand		{\ii}			{\mathrm{i}}	
\newcommand		{\jj}			{\mathrm{j}}	
\newcommand		{\init}			{\mathrm{in}}
\newcommand		{\loc}			{\mathrm{loc}}
\newcommand		{\fb}			{\mathfrak b}
\newcommand		{\eps}			{\varepsilon}
\newcommand		{\cC}			{\mathcal{C}}
\newcommand		{\op}		{\boldsymbol{\rho}}	
\newcommand		{\opm}		{\sfm}	
\newcommand		{\opmu}		{\boldsymbol{\mu}}	
\newcommand		{\opp}		{\boldsymbol{p}}
\newcommand		{\opA}		{\mathbf{A}}
\newcommand		{\Dh}		{\boldsymbol{\nabla}}	
\newcommand		{\Dhx}		{\Dh_{\!x}}				
\newcommand		{\Dhv}		{\Dh_{\!\xi}}			
\DeclareMathOperator{\adj}	{ad}
\newcommand		{\ad}[1]	{\adj_{#1}}
\title[Global-in-time semiclassical regularity for Hartree--Fock]{Global-in-time semiclassical regularity for the Hartree--Fock equation}
\author{Jacky J. Chong, Laurent Lafleche\\
{\tiny Department of Mathematics, The University of Texas at Austin, Austin, TX 78712, USA}
\\\\ Chiara Saffirio${^*}$
\\
 {\tiny Department of Mathematics and Computer Science, Spiegelgasse 1, 4051 Basel, Switzerland}
}
\subjclass[2010]{35Q55, 35B65, 82C10, 81Q20.}
\keywords{Hartree--Fock equation, Hartree equation, semiclassical, regularity, singular interaction.}
\begin{document}

\begin{abstract}
	For arbitrarily large times $T>0$, we prove the uniform-in-$\hbar$ propagation of semiclassical regularity for the solutions to the Hartree--Fock equation with singular interactions of the form $V(x)=\pm\,|x|^{-a}$ where $a\in(0,\frac12)$. As a byproduct of this result, we extend to arbitrarily long times the derivation of the Hartree--Fock and the Vlasov equations from the many-body dynamics provided in~[J.~Chong, L.~Lafleche, C.~Saffirio: \textit{arXiv:2103.10946} (2021)].
\end{abstract}

\maketitle
\let\thefootnote\relax\footnote{$^*${\it Corresponding author.} chiara.saffirio@unibas.ch}

\bigskip

\pagestyle{plain}


\section{Introduction and Main Result}

	Consider the time-dependent Hartree--Fock equation 
	\begin{equation}\label{eq:HF}
		i\hbar\,\dpt\op = \com{H_{\op},\op}
	\end{equation}
	describing the evolution of a positive self-adjoint trace class operator $\op=\op(t)$ acting on $L^2(\Rd)$. 
	Here $\hbar=\frac{h}{2\pi}$ is the reduced Planck constant, $\com{\cdot,\cdot}$ denotes the commutator $\com{A,B}=AB-BA$, and $H_{\op}$ is the Hamiltonian operator given by
	\begin{equation}\label{eq:HF-hamiltonian}
		H_{\op} = -\frac{\hbar^2\Delta}{2} + V_{\op} - h^3\,\sfX_{\op},
	\end{equation}
	where $V_{\op}$ is the mean-field potential and $\sfX_{\op}$ is the exchange operator. The mean-field potential is defined as the multiplication operator by the function $V_{\op}(x) = (K*\rho)(x)$, where $K:\Rd\to\R$ is the potential associated to some two-body interaction and $\rho(x)$ is the spatial density, defined as the rescaled diagonal of the integral kernel $\op(\cdot,\cdot)$ of the operator $\op$, given by 
	\begin{equation*}
		\rho(x)=\diag(\op)(x):=h^3\op(x,x).
	\end{equation*} 
	 The exchange operator is defined to be the operator with kernel
	\begin{equation*}
		\sfX_{\op}(x,y) = K(x-y)\,\op(x,y).
	\end{equation*}
	In this work, we normalize $\op$ so that
	\begin{equation}\label{eq:fermionic-bounds}
		h^{3}\Tr{\op} = 1,\qquad\text{and}\qquad \cC_\infty:=\Nrm{\op}{\infty},
	\end{equation}
	where $\Nrm{\cdot}{\infty}$ is the operator norm. These quantities are preserved by Equation~\eqref{eq:HF}. Furthermore, we assume the constant in~\eqref{eq:fermionic-bounds} does not depend on $\hbar$.
	With this scaling, we see that $\rho$ satisfies $\intd \rho(x) \dd x = h^3 \Tr{\op} = 1$.
	In the absence of $\sfX_{\op}$, we refer to Equation~\eqref{eq:HF} as the Hartree equation. All the results presented in this work hold for both the Hartree and the Hartree--Fock equation.
	
	In the case when $\hbar$ is fixed, say $\hbar =1$, the well-posedness theories for the Hartree and the Hartree--Fock equations are well known. For the case of the Hartree equation with Coulomb potential, one can find the proof of the global-in-time well-posedness in $L^2$ and the propagation of higher $H^s$ regularity for the wave function in~\cite{castella_$l2$_1997}, which builds on the earlier works \cite{ginibre_class_1980, ginibre_global_1985, illner_global_1994}. The case of density operators in Schatten spaces but with an infinite trace was studied in \cite{lewin_hartree_2015}. In the case of the Hartree--Fock equation, well-posedness in $H^2$ was proved in \cite{bove_existence_1974} for bounded interactions and then in \cite{chadam_time-dependent_1976, bove_hartreefock_1976} for more singular potentials including the case of the Coulomb potential. However, these works do not provide satisfactory estimates when $\hbar$ is small and tending towards zero. Obtaining uniform-in-$\hbar$ estimates is crucial for understanding, of course, the errors in the semiclassical limit $\hbar\to 0$ as in \cite{lions_sur_1993, lafleche_propagation_2019, saffirio_semiclassical_2019, saffirio_hartree_2020, lafleche_global_2021, lafleche_strong_2021}, but also to create adapted numerical schemes~\cite{golse_convergence_2021} and to understand the joint mean-field and semiclassical limit \cite{benedikter_mean-field_2014, benedikter_mean-field_2016-1, porta_mean_2017, chong_many-body_2021}.
	
	In this paper, we are interested in proving the global-in-time propagation of regularity uniformly in the semiclassical parameter $\hbar$ for solutions to the Hartree--Fock equation~\eqref{eq:HF} when the interaction potential $K$ is the inverse power law potential
	\begin{equation*}
		K(x) = \dfrac{\pm 1}{\n{x}^a},\qquad \text{ with }\quad a\in \(0,\tfrac{1}{2}\). 
	\end{equation*}
	In particular, $K\in L^{\fb,\infty}(\Rd)$, where $\fb=\frac{3}{a+1}$, and $L^{\fb,\infty}(\Rd)$ denotes the weak $L^\fb$ space on $\Rd$. Our main motivation is to extend the results of the local-in-time regularity obtained in our previous paper~\cite{chong_many-body_2021} to global-in-time results, leading to the global-in-time mean-field and semiclassical limits for fermions from the $N$ body Schr\"odinger equation to the Hartree--Fock and Vlasov equation.
	
	Before stating our main result, we introduce the function spaces that we will be working with. First, we define the semiclassical phase space Lebesgue norms by
	\begin{equation*}
		\Nrm{\op}{\L^p} := h^{\frac{3}{p}} \Nrm{\op}{p} = h^{\frac{3}{p}}\Tr{\n{\op}^p}^{\frac{1}{p}}
	\end{equation*}
	for $p\in[1,\infty]$, with the obvious modification for $p=\infty$. Here $\Nrm{\op}{p}$ denotes the Schatten norm of order $p$ and $\n{A}=\sqrt{A^*A}$ the absolute value of the operator $A$ with adjoint $A^*$. Let $\opp=-i\hbar\nabla$ be the momentum operator and 
	\begin{equation*}
		\opm_n := 1+\n{\opp}^n,\qquad n\in\N.
	\end{equation*}
	The moment of order $n$ and the weighted semiclassical Lebesgue norms with the operator weight $\opm_n$ are given by 
	\begin{equation*}
		M_n := h^3 \Tr{\n{\opp}^n\op} \qquad\text{and}\qquad \Nrm{\op}{\L^p(\opm_n)} := \Nrm{\op\,\opm_n}{\L^p}.
	\end{equation*}
	In order to consider the quantum analogue of Sobolev norms, we introduce the following operators 
	\begin{equation*}
		\Dhx\op := \com{\nabla,\op}\qquad \text{and}\qquad \Dhv\op := \com{\frac{x}{i\hbar},\op}.
	\end{equation*}
	Then the semiclassical homogeneous Sobolev norms are defined by
	\begin{equation*}
		\Nrm{\op}{\dot{\cW}^{1,p}} := \sum_{k=1}^3 \(\Nrm{\Dh_{x_{k}}\op}{\L^p} + \Nrm{\Dh_{\xi_k}\op}{\L^p}\)
	\end{equation*} 
	with corresponding inhomogeneous Sobolev norms given by $\Nrm{\op}{\cW^{1,p}} := \Nrm{\op}{\L^p} + \Nrm{\op}{\dot{\cW}^{1,p}}$, and the weighted semiclassical Sobolev norms with the operator weight $\opm_n$ by
	\begin{equation*}
		\Nrm{\op}{\cW^{1,p}(\opm_n)} := \Nrm{\op\,\opm_n}{\cW^{1,p}}.
	\end{equation*}
	Our main result states the global-in-time propagation of the regularity in terms of these norms.
	\begin{theorem}\label{thm:prop-reg-grad}
		Let $a\in\(0,\frac{1}{2}\)$, $n\in 2\N$ be an even integer
		and $\op$ be a solution to the Hartree--Fock equation~\eqref{eq:HF} with initial datum $\op^\init\in\L^\infty(\opm_n)$ satisfying~\eqref{eq:fermionic-bounds} and such that
		\begin{equation}
			\op^\init\in\cW^{1,2}(\opm_n)\cap\cW^{1,q}(\opm_n)
		\end{equation}
		for $q\in [2, \infty)$, and with moments of order strictly larger than $\frac{3}{1-a}\(n+a+1\)$ bounded uniformly in $\hbar$. Then
		\begin{equation}\label{eq:regu_op}
			\op\in L^\infty_{\loc}(\R_+,\cW^{1,2}(\opm_n)\cap\cW^{1,q}(\opm_n)\cap\L^\infty(\opm_n))
		\end{equation}
		uniformly in $\hbar\in(0,1)$.
	\end{theorem}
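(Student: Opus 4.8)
The plan is to set up a Grönwall-type argument on the semiclassical Sobolev--moment norms and show that the only obstruction to global existence is the growth of these quantities, which we control by a bootstrap. First I would rewrite the evolution of the weighted operators $\Dhx\op\,\opm_n$ and $\Dhv\op\,\opm_n$ using the Heisenberg equation: differentiating in time, one obtains a linear part (the kinetic transport $\com{-\tfrac{\hbar^2\Delta}{2},\cdot}$ commuted through $\opm_n$, which produces lower-order moment terms) plus a forcing term coming from the commutator with the mean-field potential $V_{\op}$ and the exchange term $h^3\sfX_{\op}$. The key algebraic identities are the commutator Leibniz rules $\Dhx\com{V_{\op},\op}$ and $\Dhv\com{V_{\op},\op}$, which, when expanded, generate terms of the form $(\text{derivative of }V_{\op})\cdot\op$ and $V_{\op}\cdot(\Dh\op)$; the former is the dangerous one and must be estimated by a quantum Kato--Ponce / fractional Leibniz inequality together with a Sobolev-type bound on $\Dx V_{\op}$ in terms of $\rho$ and hence of lower-order norms of $\op$, exactly as in the local-in-time analysis of~\cite{chong_many-body_2021}.

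Next I would close the estimate on moments. Since $M_n$ and more generally the moments of order up to roughly $\tfrac{3}{1-a}(n+a+1)$ appear in the right-hand side of the Sobolev estimates (they enter through the interpolation inequalities used to bound $\Nrm{\Dx V_{\op}}{L^\infty}$ or $\Nrm{\Dx V_{\op}}{L^p}$ by $\Nrm{\op}{\L^1(\opm_m)}$-type quantities), I need an a priori bound on these moments that is uniform in $\hbar$ and grows at most locally in time. The evolution of $M_m$ is governed by $\ddt M_m \lesssim \Nrm{\,\cdot\,}{}$ times the force $\Dx V_{\op}$, and one shows $\ddt M_m \leq C(1+M_m)$ with $C$ depending only on the conserved quantities $\Tr\op = h^{-3}$, $\cC_\infty = \Nrm{\op}{\infty}$, the mass $\int\rho = 1$, and the energy; hence $M_m$ is finite on every bounded time interval uniformly in $\hbar$. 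This is where the hypothesis $a<\tfrac12$ is essential: it guarantees $K\in L^{\fb,\infty}$ with $\fb = \tfrac{3}{a+1} > 2$, so that the weak-Young and Hardy--Littlewood--Sobolev inequalities give the needed control of $V_{\op}$ and its derivative with integrability exponents that interpolate favorably against the fixed moment budget.

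Finally I would assemble the pieces: feeding the moment bounds into the Sobolev estimates yields a closed differential inequality of the form
\begin{equation*}
	\ddt \Big(\Nrm{\op}{\cW^{1,2}(\opm_n)} + \Nrm{\op}{\cW^{1,q}(\opm_n)} + \Nrm{\op}{\L^\infty(\opm_n)}\Big) \leq C(t)\Big(1 + \Nrm{\op}{\cW^{1,2}(\opm_n)} + \Nrm{\op}{\cW^{1,q}(\opm_n)} + \Nrm{\op}{\L^\infty(\opm_n)}\Big),
\end{equation*}
with $C(t)$ locally bounded and $\hbar$-independent, from which Grönwall's lemma gives \eqref{eq:regu_op}. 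The $\L^\infty(\opm_n)$ propagation is handled separately but in parallel, since it does not see the critical Leibniz term in the same way (the operator norm of $\com{V_{\op},\op\,\opm_n}$ is controlled by $\Nrm{\Dx V_{\op}}{L^\infty}$ and $\Nrm{\op\,\opm_n}{\infty}$), and the $\cW^{1,q}$ bound for general $q$ is obtained by the same scheme using the corresponding $L^q$ quantum Leibniz estimates; the interval of time over which the local solution is known to exist is then extended to all of $\R_+$ by the continuation criterion, since the bootstrapped norms cannot blow up in finite time. The main obstacle I anticipate is the critical term $\Nrm{(\Dx V_{\op})\,\op\,\opm_n}{\L^p}$: handling it requires both a sufficiently sharp quantum fractional Leibniz rule (so that the derivative can be distributed without losing powers of $\hbar$) and a careful choice of the moment exponent $\tfrac{3}{1-a}(n+a+1)$ so that the interpolation used to bound $\Dx V_{\op}$ stays subcritical; getting the $\hbar$-dependence exactly right here — rather than merely qualitatively — is the delicate point, and it is precisely what distinguishes this uniform-in-$\hbar$ statement from the classical fixed-$\hbar$ well-posedness theory.
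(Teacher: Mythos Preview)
Your outline follows the paper's strategy---propagate moments globally, use them to control $\Nrm{\op}{\L^q(\opm_n)}$ and $\Nrm{\op}{\L^\infty(\opm_n)}$, then close a Gr\"onwall inequality on $N_q:=\Nrm{\op}{\cW^{1,q}(\opm_n)}$---but there is one genuine gap. You assert that ``the $\cW^{1,q}$ bound for general $q$ is obtained by the same scheme,'' and you identify the dangerous term as $\Nrm{(\Dx V_{\op})\,\op\,\opm_n}{\L^q}$. The actual obstruction is the commutator $\frac{1}{\hbar}\Nrm{\com{E_{\op},\op\,\opm_n}}{\L^q}$: for small $q$ (precisely $q\le \frac{6}{1+2a}$) this can be bounded using only $\Nrm{\rho}{L^{r\pm\eps}\cap L^2}$, hence by moments alone, but for $q>\frac{6}{1+2a}$ the available estimate forces a factor $\Nrm{\rho}{H^1}$ on the right-hand side. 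Since $\Nrm{\nabla\rho}{L^2}$ is itself controlled by $N_2$, trying to close the inequality for all $q$ simultaneously is circular.

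The paper resolves this by a two-step bootstrap: first close the Gr\"onwall loop for $N_2+N_q$ with $q\le\frac{6}{1+2a}$ (in particular obtaining a global bound on $N_2$), and only then revisit the commutator estimate for large $q$ using the now-available control of $\Nrm{\rho}{H^1}$ through $N_2$. A second point worth sharpening: the restriction $a<\tfrac12$ does not enter through the moment propagation (that works up to $a\le\tfrac45$) but through the exchange-term estimates, which produce factors $\hbar^{\frac12-a}$ and require $\frac{1}{q}+\frac12-\frac{1}{\fb}>0$; both are harmless precisely when $a<\tfrac12$.
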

	Notice that Theorem~\ref{thm:prop-reg-grad} extends to arbitrarily long times the local-in-time theory studied in~\cite[Theorem~3.1]{chong_many-body_2021} for $a\in(0,\frac12)$. As a corollary, Theorem~\ref{thm:prop-reg-grad} entails the global-in-time derivation of the Hartree--Fock and the Vlasov equations from the many-body Schr\"{o}dinger equation in the mean-field regime for mixed states, thus extending from local to global-in-time Theorem~3.2 and Theorem~3.3 in~\cite{chong_many-body_2021}. The crucial regularity conditions needed to perform the joint mean-field and semiclassical limit in~\cite{chong_many-body_2021} when $a \in (0,\frac12)$ are indeed of the form~\eqref{eq:regu_op} with $q > \frac{6}{1+2a}$ (take $p=2$ and $q=q_1$ in the beginning of~\cite[Section~10.1]{chong_many-body_2021}).
	
	As in \cite{lafleche_propagation_2019, lafleche_global_2021}, the key ingredient to getting long-time estimates in this work is the usage of quantum moments, used to bound the semiclassical weighted Lebesgue norms. In this paper, we prove the global-in-time propagation of quantum moments for the solution of the Hartree--Fock equation~\eqref{eq:HF} when $a\in(0,\frac45]$ and show that if $a\in(0,\frac12)$ then the global-in-time bound on the moments, combined with a Gr\"{o}nwall-type argument, proves the uniformly-in-$\hbar$ propagation of regularity in weighted semiclassical Sobolev spaces. 
	
	The paper is structured as follows: in Section~\ref{sec:prop-moments}, we recall the result obtained in~\cite{lafleche_propagation_2019} about the global-in-time propagation of quantum moments for the Hartree equation when $a\in(0,\frac45]$ and extend it to solutions of the Hartree--Fock equation~\eqref{eq:HF}, while Section~\ref{sec:proof} is devoted to the proof of Theorem~\ref{thm:prop-reg-grad}.

\section{Propagation of Moments}\label{sec:prop-moments}

	\begin{theorem}\label{thm:propag_moments_HF}
		Let $a\in(0,\frac45]$, $n\in 2\N$ and $\op$ be a solution of the Hartree--Fock equation with initial condition $\op^\init\in\L^1\cap\L^\infty$ with moments of order $n$ bounded uniformly in $\hbar$. Then there exists a continuous function $\Phi_n\in C^0(\R_+)$ independent of $\hbar$ such that for any $t\in\R_+$,
		\begin{equation*}
			M_n(t) \leq \Phi_n(t).
		\end{equation*}
	\end{theorem}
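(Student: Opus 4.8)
The plan is to differentiate the quantum moment $M_n(t) = h^3\Tr{\n{\opp}^n\op}$ in time along the flow of \eqref{eq:HF} and close a Grönwall-type inequality. Writing $\ddt M_n = \frac{h^3}{i\hbar}\Tr{\n{\opp}^n\com{H_\op,\op}} = \frac{h^3}{i\hbar}\Tr{\com{\n{\opp}^n,H_\op}\op}$, the kinetic part $-\frac{\hbar^2\Delta}{2}$ commutes with $\n{\opp}^n$ and drops out, so the only contributions come from the commutators $\com{\n{\opp}^n,V_\op}$ and $\com{\n{\opp}^n,\sfX_\op}$. Since $n$ is even, $\n{\opp}^n = (-\hbar^2\Delta)^{n/2}$ is a differential operator of order $n$, and its commutator with the multiplication operator $V_\op$ is a differential operator of order $n-1$ whose coefficients involve derivatives $\nabla^k V_\op$ for $1\le k\le n$. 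Hence $\ddt M_n$ is controlled by a sum of terms of the form $h^3\Tr{(\nabla^k V_\op)\,\opp^{\alpha}\op\,\opp^{\beta}}$ with $|\alpha|+|\beta| \le n-1$, which after Hölder in Schatten spaces and interpolation are bounded by $\Nrm{\nabla^k V_\op}{L^\infty}$ (or a suitable $L^r$ norm) times lower moments $M_j$ with $j<n$.

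The key point is then to estimate $\Nrm{\nabla^k V_\op}{L^r}$ in terms of the spatial density $\rho$. We have $\nabla^k V_\op = (\nabla^k K)*\rho$, and for $K(x) = \pm\n{x}^{-a}$ one has $\nabla^k K \in L^{\fb_k,\infty}$ with $\fb_k = \frac{3}{a+k}$ for the relevant range; by the weak Young / Hardy–Littlewood–Sobolev inequality this is controlled by $\Nrm{\rho}{L^{s}}$ for an appropriate exponent $s$ depending on $k$ and $a$. The density $\rho$ is in turn bounded by moments via the standard semiclassical interpolation inequality (the quantum analogue of the Lieb–Thirring / moment bound): $\Nrm{\rho}{L^s} \lesssim \Nrm{\op}{\L^\infty}^{1-\theta} M_m^{\theta}$ for suitable $s$, $m$, $\theta$, using that $\Nrm{\op}{\L^\infty} = \cC_\infty$ is conserved. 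Crucially, because $a\le\frac45$ the worst term — coming from $\nabla V_\op$, i.e. $k=1$ — produces at most a linear dependence on $M_n$ itself together with lower-order moments and conserved quantities, so that $\ddt M_n \le C(t)\,(1+M_n)$ and Grönwall closes the estimate globally in time; the exchange term $h^3\sfX_\op$ is handled the same way and is in fact subleading in $h$. One must also verify that the lower moments $M_j$, $j<n$, and in particular $M_2$ (the kinetic energy, controlled by conservation of total energy), are propagated, which is done by a descending induction on $n$, the base case $n=2$ following from the a priori energy bound together with the interaction energy estimate for $a<\frac32$.

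The main obstacle I expect is the bookkeeping needed to verify that, for the full range $a\in(0,\frac45]$, every term arising from $\com{\n{\opp}^n,V_\op}$ and from the exchange commutator $\com{\n{\opp}^n,\sfX_\op}$ can be bounded using only $M_j$ with $j\le n$ appearing at most linearly, plus conserved quantities and moments of order $j<n$ to powers that the descending induction can absorb — in other words, checking that no term forces a superlinear dependence on $M_n$. This is a matter of tracking exponents carefully in the Hölder–interpolation–HLS chain and, for the singular potential, making sure the weak-$L^\fb$ bounds on $\nabla^k K$ are compatible with the admissible range of $s$ in the density interpolation; the restriction $a\le\frac45$ is precisely what makes this work. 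Since Theorem~\ref{thm:propag_moments_HF} in the Hartree case (without $\sfX_\op$) is already established in \cite{lafleche_propagation_2019}, the genuinely new part is showing that the exchange term contributes only lower-order and $h$-suppressed corrections, which follows by writing $\Tr{\com{\n{\opp}^n,\sfX_\op}\op}$ in terms of the kernel $K(x-y)\op(x,y)$ and using the same Schatten–Sobolev estimates on $\op$ itself.
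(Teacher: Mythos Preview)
Your overall strategy---differentiate $M_n$, bound the commutator contributions by moment densities via H\"older/HLS and the semiclassical kinetic interpolation, then close by Gr\"onwall using conservation of $M_2$---matches the paper. However, there is a genuine gap in your treatment of the exchange term.

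You write that ``the exchange term $h^3\sfX_\op$ is handled the same way and is in fact subleading in $h$''. This is incorrect: the exchange contribution is \emph{not} small in $h$. The time derivative picks up $\frac{h^3}{i\hbar}\Tr{\com{\n{\opp}^n,h^3\sfX_\op}\op}$, and when you diagonalise $\op$ and express everything through the moment densities $\rho_k = h^3\sum_j\lambda_j\n{\opp^{k/2}\psi_j}^2$, the two factors of $h^3$ are exactly absorbed by the two eigenvalue sums (each contributing $h^{-3}$), leaving an $O(1)$ quantity. Concretely, the paper shows (Lemma~\ref{lem:bound_tr_com_X}) that
\[
\n{\tfrac{h^3}{\hbar}\Tr{\com{h^3\sfX_\op,\opp_l^n}\op}}
\leq C\sup_{k_1+\dots+k_4=2(n-1)}\inf_Q\ \prod_{j=1}^4\Nrm{\rho_{k_j}}{L^{q_j}}^{1/2},
\]
which is of the \emph{same} order as the mean-field bound~\eqref{eq:bound_tr_com_V}. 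Proving this lemma is precisely the new content of the theorem; treating it as a lower-order correction would leave the Hartree--Fock case unproved.

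A second, smaller issue: your plan to bound terms by $\Nrm{\nabla^k V_\op}{L^\infty}$ (or $L^r$) for $1\le k\le n$ cannot work as written, since $\nabla^k K\sim \n{x}^{-a-k}$ falls out of every $L^p$ once $a+k\ge 3$, which already happens for $n\ge 4$. The remedy used both in~\cite{lafleche_propagation_2019} and in the paper is to integrate by parts so that only a \emph{single} derivative lands on $K$ and the remaining $n-1$ derivatives are redistributed onto the eigenfunctions, producing products of moment densities $\rho_{k_j}$ of various orders. After that redistribution, the kinetic interpolation~\eqref{eq:M2_Mn_interpolation} gives a bound of the form $(1+M_2)^{\Theta_2}(1+M_n)^{\Theta_n}$ with $\Theta_n\le 1$ exactly when $a\le\tfrac45$, and Gr\"onwall closes. (The paper actually interpolates directly between $M_2$ and $M_n$ rather than doing a full descending induction, but your induction scheme would work as well.)
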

	
	\begin{remark}
		If $a\in(\frac45,2)$, then we can still get a short-time estimate when $a \leq a_n = \frac{2\,n}{n+3}$ (see Remark~\ref{rmk:a_max}).
		In particular, $a_n$ is larger than $1$ as soon as $n\geq 3$. More generally, for any $a\in(0,2)$, the propagation of moment of order $n$ holds for any even $n\geq n_a = \frac{3\,a}{2-a}$.
	\end{remark}
	
	\begin{remark}
		The proof of the theorem can be used to get an explicit function $\Phi_n(t)$. It has a polynomial growth in time when $a < \frac45$, and an exponential growth in time when $a = \frac45$.
	\end{remark}

\subsection{The Hartree Equation}\label{sec:moments_Hartree}
	
	In this section, we recall the main ingredients of the proof of the propagation of moments for the Hartree equation obtained in~\cite[Theorem~3]{lafleche_propagation_2019}. This provides us a guide for the extension to the case of the Hartree--Fock equation addressed in Theorem~\ref{thm:propag_moments_HF}. 

	Recall that for any density operator $\op$ (i.e. any positive trace class operator with trace one), there exists $J\subset\N$, a sequence of functions $\(\psi_j\)_{j\in J}$ orthonormal in $L^2(\Rd)$ and a positive summable sequence $\(\lambda_j\)_{j\in J}$ such that $\op$ can be written as
	\begin{equation*}
		\op = \sumj \lambda_j \ket{\psi_j} \bra{\psi_j}.
	\end{equation*}
	For any even integer $n\in 2\N$, we define the moment density of order $n$ by
	\begin{equation}\label{eq:local_moments}
		\rho_n(x) := h^3\sumj \lambda_j\n{\opp^\frac{n}{2}\psi_j(x)}^2 = \Diag{\opp^\frac{n}{2}\op\cdot\opp^\frac{n}{2}}(x)
	\end{equation}
	so that the moment of order $n$, previously defined, can be rewritten as $M_n = \Nrm{\rho_n}{L^1}$. Notice we also have $M_n = \bNrm{\sqrt{\op}\n{\opp}^{\frac{n}{2} }}{\L^2}^2$. With this notation, Inequality~(38) in~\cite{lafleche_propagation_2019} reads
	\begin{equation}\label{eq:bound_tr_com_V}
		\n{h^3 \Tr{\tfrac{1}{i\hbar}\com{V_{\op},\n{\opp}^n}\op}} \leq C\, M_n^\frac{1}{2} \sup_{\substack{(j,k,l)\in\N^3\\j+k+l=n/2-1}} \Nrm{\rho_{2j}}{L^\alpha}^\frac{1}{2} \Nrm{\rho_{2k}}{L^\beta}^\frac{1}{2} \Nrm{\rho_{2l}}{L^\gamma}^\frac{1}{2},
	\end{equation}
	where
	\begin{equation*}
		\frac{1}{\alpha'} + \frac{1}{\beta'} + \frac{1}{\gamma'} = \frac{1}{\fb}
	\end{equation*}
	and $\alpha',\,\beta',\,\gamma'$ are the H\"{o}lder conjugates of $\alpha,\,\beta,\,\gamma$ respectively. The semiclassical kinetic inequality~\cite[Theorem~6]{lafleche_propagation_2019}, which is a generalization of the Lieb--Thirring inequality for the $n$th order moment density, tells us that for any $(k,n)\in(2\N)^2$ verifying $k\in[0,n]$ we have that 
	\begin{align}\label{eq:interpolation}
		\Nrm{\rho_k}{L^p} & \leq C \Nrm{\op}{\L^\infty}^\frac{1}{p'} M_n^\frac{1}{p}\qquad \text{ with } \qquad p=p_{n, k}:= \frac{3+n}{3+k}.
	\end{align}
	Combining this inequality with Inequality~\eqref{eq:bound_tr_com_V} implies the existence of some positive constants $\Theta$, $\Theta_0$, and $\Theta_2$ such that the following estimate (see \cite[Inequality~(44)]{lafleche_propagation_2019})
	\begin{equation*}
		\n{h^3 \Tr{\tfrac{1}{i\hbar}\com{V_{\op},\n{\opp}^n}\op}} \leq C \Nrm{\op}{\L^\infty}^{\Theta_2} M_{n-2}^{\Theta_0}\, M_{n}^{\Theta}
	\end{equation*}
	holds, where $\Theta \leq 1$ when $a\leq \frac45$. This leads to the boundedness of moments by a Gr\"onwall-type argument, together with the uniform boundedness of $M_2$ due to the conservation of energy and an induction on $n$. Moreover, when $a < \frac45$, then $\Theta < 1$, thus proving a polynomial growth in time in this latter case.

\bigskip
\subsection{The Hartree--Fock Equation}
	
	We now consider the Hartree--Fock equation. By estimating the exchange (operator) term with a similar strategy, we get the analogue of~\cite[Theorem~3]{lafleche_propagation_2019}. The analogue of Inequality~\eqref{eq:bound_tr_com_V} for the exchange term is given by the following lemma.
	
	\begin{lem}\label{lem:bound_tr_com_X}
		Denote the components of $\opp$ by $\opp_{x_l}$ or simply $\opp_l$ for $l\in\set{1,2, 3}$. Suppose $n\in\N$ and define $Q\subset [\frac{1}{2},\infty]^4$ the set of $(q_1,q_2,q_3,q_4)$ verifying $\frac{1}{q_1}+\frac{1}{q_2} \in (0,2)$, $\frac{1}{q_3}+\frac{1}{q_4} \in (0,2)$ and
		\begin{equation}\label{eq:HLS-Holder_coefs}
			\frac{1}{q_1'} + \frac{1}{q_2'} + \frac{1}{q_3'} + \frac{1}{q_4'} = \frac{2}{\fb}.
		\end{equation}
		Then there exists a constant $C$ independent of $\hbar$ such that
		\begin{equation}\label{eq:bound_tr_com_X}
			\n{\frac{h^3}{\hbar} \Tr{\com{h^3\sfX_{\op},\opp_l^n}\op}} \leq C \sup_{k_1,\dots k_4} \inf_Q \Nrm{\rho_{k_1}}{L^{q_1}}^\frac{1}{2} \Nrm{\rho_{k_2}}{L^{q_2}}^\frac{1}{2} \Nrm{\rho_{k_3}}{L^{q_3}}^\frac{1}{2} \Nrm{\rho_{k_4}}{L^{q_4}}^\frac{1}{2},
		\end{equation}
		where the supremum is taken over all the integers $(k_1,\dots, k_4) \in \N^4$ such that $k_1+\dots+k_4 = 2\(n-1\)$.
	\end{lem}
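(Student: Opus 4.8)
The plan is to expand the trace $\frac{h^3}{\hbar}\Tr{\com{h^3\sfX_{\op},\opp_l^n}\op}$ using the kernel representation of the exchange operator and the decomposition $\op=\sumj\lambda_j\ket{\psi_j}\bra{\psi_j}$, then telescope the commutator with $\opp_l^n$ into a sum over lower-order momentum operators acting on the interaction kernel $K$, and finally close the estimate with a Hardy--Littlewood--Sobolev inequality in the four ``particle'' variables followed by Hölder. First I would write, using the cyclicity of the trace and $h^3\sfX_{\op}(x,y)=h^3K(x-y)\op(x,y)$, the quantity as a double integral over $\Rd\times\Rd$ of $K(x-y)$ against $\op(x,y)$ and the kernel of $\com{\opp_l^n,\op}$. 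The commutator with $\opp_l^n = (-i\hbar\bd_{x_l})^n$ is the key algebraic step: writing $\opp_l^n\op - \op\,\opp_l^n$ in terms of kernels produces $(-i\hbar)^n(\bd_{x_l}^n - \bd_{y_l}^n)\op(x,y)$, and the difference $\bd_{x_l}^n-\bd_{y_l}^n$ applied to a product structure expands, via a discrete Leibniz/telescoping identity, into a sum of terms of the form $\bd_{x_l}^{r}(\cdot)\,\bd_{y_l}^{s}(\cdot)$ with $r+s\le n-1$ hitting factors of $K$, $\psi_j$ and $\conj{\psi_j}$. Distributing derivatives onto $K(x-y)$ by the chain rule (each $\bd_{x_l}$ or $\bd_{y_l}$ on $K(x-y)$ costs one derivative of $K$) and integrating by parts to move the remaining derivatives onto the $\psi_j$'s, one arrives at a bound by a finite sum of integrals of the schematic form
\begin{equation*}
	\iintd \n{(\bd^{m}K)(x-y)}\,\n{\opp_l^{a_1}\psi_j(x)}\,\n{\opp_l^{a_2}\psi_k(x)}\,\n{\opp_l^{a_3}\psi_j(y)}\,\n{\opp_l^{a_4}\psi_k(y)}\dd x\dd y
\end{equation*}
summed against $\lambda_j\lambda_k$, with $a_1+a_2+a_3+a_4 = 2(n-1) - m$. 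Since $K\in L^{\fb,\infty}$ and, more to the point, $\bd^m K \in L^{\fb_m,\infty}$ with the scaling relation fixing the exponent, the derivatives on $K$ only change which weak-Lebesgue space appears, and after summing over $m$ the worst case (all derivatives on $K$, i.e.\ $m=0$ effectively, or the case that saturates Hölder) is the one recorded in the statement with $k_1+\dots+k_4 = 2(n-1)$.

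Next I would apply the Hardy--Littlewood--Sobolev inequality to the kernel $\n{K(x-y)}\in L^{\fb,\infty}$ viewed as acting between functions on $\Rd_x$ and $\Rd_y$: this converts the double integral into a product of an $L^{p}_x$ norm of $\sumj\lambda_j\n{\opp_l^{a_1}\psi_j}\n{\opp_l^{a_2}\psi_k}$ (after a Cauchy--Schwarz in $j$, or rather keeping the bilinear structure) and an $L^{p}_y$ norm of the analogous $y$-factor, with the HLS exponents constrained by $\frac1{p'_x}+\frac1{p'_y}=\frac1\fb$ per factor — doubled because there are two kernels' worth of Hölder budget, giving exactly \eqref{eq:HLS-Holder_coefs}. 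Then Hölder in $x$ splits $\norm{\sumj\lambda_j\n{\opp_l^{a_1}\psi_j}\n{\opp_l^{a_2}\psi_j}}{L^{q}}$, via Cauchy--Schwarz pointwise in the $\psi_j$-sum, into $\Nrm{\rho_{k_1}}{L^{q_1}}^{1/2}\Nrm{\rho_{k_2}}{L^{q_2}}^{1/2}$ where $k_i = 2a_i$ and $\frac1{q}=\frac1{q_1}+\frac1{q_2}$; here recall $\rho_k(x)=h^3\sumj\lambda_j\n{\opp^{k/2}\psi_j(x)}^2$ by \eqref{eq:local_moments}, and $\n{\opp_l^{a}\psi_j}\le \n{\opp^{a}\psi_j}$ componentwise so the moment densities control these. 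Doing the same in $y$ and optimizing over the admissible Hölder/HLS splits yields the $\inf_Q$, while the sum over the finitely many ways the derivatives distribute gives the $\sup$ over $(k_1,\dots,k_4)$ with $k_1+\dots+k_4=2(n-1)$; the conditions $\frac1{q_1}+\frac1{q_2}\in(0,2)$ and $\frac1{q_3}+\frac1{q_4}\in(0,2)$ are precisely what make the pointwise Cauchy--Schwarz-plus-Hölder step legitimate (each pair must Hölder-combine to an exponent $\ge \frac12$, i.e.\ the $\rho$'s live in spaces with $q_i\ge\frac12$, and non-degeneracy $>0$).

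The main obstacle I expect is the bookkeeping in the commutator expansion $\bd_{x_l}^n - \bd_{y_l}^n$ combined with the chain rule on $K(x-y)$: one must track that every term, after integration by parts, genuinely has total momentum-derivative order $2(n-1)-m$ distributed as $a_1+\dots+a_4$ with $a_i\ge 0$ integers, and that the derivative falling on $K$ keeps it in a weak-$L^p$ space with the right exponent so that HLS still applies (this uses $a\in(0,\ldots)$ only through $\fb=\frac3{a+1}>1$ and the fact that $\bd^mK$ scales homogeneously). A secondary subtlety is justifying the integrations by parts and the interchange of sum and integral, which follows from the a priori regularity of $\op$ (we may assume $\op\in\L^\infty(\opm_n)$ and the moment hypotheses from the ambient setting), and ensuring the constant is genuinely $\hbar$-independent — this is automatic because each $\opp_l = -i\hbar\bd_{x_l}$ carries exactly the $\hbar$ needed to match the $h^3/\hbar$ prefactor and the $h^3$ in each $\rho_k$, so all powers of $\hbar$ cancel by the same scaling calculation as in~\cite[Section~2]{lafleche_propagation_2019}.
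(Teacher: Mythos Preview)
Your overall strategy matches the paper's: write the trace via kernels and the eigenfunction decomposition of $\op$, expand the commutator by Leibniz, redistribute derivatives by integration by parts onto the four $\psi$-factors, and close with Cauchy--Schwarz in the eigenvalue sums followed by Hardy--Littlewood--Sobolev and H\"older. However, several pieces of your bookkeeping are off and, as written, would not produce the exponent condition~\eqref{eq:HLS-Holder_coefs}.

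The kernel entering HLS must be $\partial_l K$, not $K$. After Leibniz on $\opp_{x_l}^n\bigl(K(x-y)\op(x,y)\bigr)$ every surviving term carries at least one $\opp_l=-i\hbar\,\partial_l$ on $K$ (the $k=n$ term cancels with the uncommuted piece), and the prefactor $1/\hbar$ converts exactly one of these into a bare $\partial_l$. The remaining $n-1$ powers of $\opp_l$ are then moved, via the convolution structure, onto the four eigenfunction factors, so the derivative orders satisfy $a_1+\dots+a_4=n-1$ and hence $k_1+\dots+k_4=2(n-1)$ with $k_i=2a_i$; your formula $\sum a_i=2(n-1)-m$ is not correct, and keeping a general $m\ge 2$ derivatives on $K$ cannot work for large $n$ since $\partial^m K$ eventually fails to be locally integrable. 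It is $\nabla K\in L^{\fb,\infty}$ with $\fb=3/(a+1)$ (not $K$ itself) that drives the HLS step, and the factor $2/\fb$ in~\eqref{eq:HLS-Holder_coefs} comes from a \emph{single} HLS application $1/p'+1/q'=1/\fb$ followed by the H\"older splitting $1/p=1/(2q_1)+1/(2q_2)$, not from ``two kernels''. Also, the pointwise inequality $\lvert\opp_l^{a}\psi_j(x)\rvert\le\lvert\opp^{a}\psi_j(x)\rvert$ you invoke is false in general; the Cauchy--Schwarz step is simply $\sum_j\lambda_j\lvert\opp_l^{a}\psi_j\rvert\,\lvert\opp_l^{b}\psi_j\rvert\le\rho_{2a}^{1/2}\rho_{2b}^{1/2}$ with $\rho_k$ built from the same component $\opp_l$. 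Finally, the paper adds a rebalancing integration by parts (treating $k\le n/2$ and $k>n/2$ separately) to force each $k_i\le n$; this is not needed for the lemma as stated, since the supremum runs over all $(k_1,\dots,k_4)$ with $\sum k_i=2(n-1)$, but it is what later allows interpolation against $M_n$ alone.
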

	
	\begin{proof}
		Let us begin by making the observation that $[\sfX_{\op}, \opp_l^n]\op$ is trace class.
		By the Leibniz formula, the kernel of $\opp_l^n\sfX_{\op}\op$ is given by
		\begin{multline*}
			\opp_l^n\sfX_{\op} \op(x, y) = \sum^{n}_{k=0} \binom{n}{k} \intd \opp_{x_l}^{n-k} K(x-z)\,\opp_{x_l}^{k}\op(x, z)\,\op(z, y)\dd z
			\\
			= \sum^{n}_{k=0}\sum_{\ell=0}^{n-k} \binom{n}{k}\binom{n-k}{\ell}(-1)^{n-k}\intd K(x-z)\(\opp_{x_l}^{k}\opp_{z_l}^{\ell}\op(x, z)\)\opp_{z_l}^{n-k-\ell}\op(z, y)\dd z
			\\
			=: \sum^{n}_{k=0}\sum_{\ell=0}^{n-k} \binom{n}{k}\binom{n-k}{\ell}(-1)^{n-k+1}\opA_{n, k, l}(x, y).
		\end{multline*}
		It is clear that each $\opA_{n,k,l}$ is a product of two Hilbert--Schmidt operators, and so is trace class, provided $\op$ is sufficiently regular. Since $\sfX_{\op}\opp_l^n\op = \opA_{n,0,0}$ is also trace class, it follows that $[\sfX_{\op}, \opp_l^n]\op$ is indeed trace class. 

		Let $I_n := \Tr{\com{\sfX_{\op},\opp_l^n}\op}$. Then, by \cite[VI.7 Theorem 17]{gaal_linear_1973}, we may express the trace of $\com{\sfX_{\op},\opp_l^n}\op$ in terms of its kernel as follows
		\begin{multline*}
			I_n = \iintd K(x-y)\,\op(y,x)\, \opp_{x_l}^n( \op)(x,y) - \opp_{x_l}^n\!\(K(x-y)\op(x,y)\) \op(y,x)\dd x\dd y.
		\end{multline*}
		Therefore, using the Leibniz formula and then diagonalizing the self-adjoint compact operator $\op$ yields
		\begin{multline*}
			I_n = -\sum_{k=0}^{n-1} \binom{n}{k}\iintd (\opp_{x_l}^{n-k}K)(x-y)\(\opp_{x_l}^k\op(x,y)\) \op(y,x)\dd x\dd y
			\\
			= -\sum_{(\ii,j)\in J^2}\sum^{n-1}_{k=0} \binom{n}{k} \lambda_{\ii}\, \lambda_j \intd (\opp_{l}^{n-k}K)*\!\(\conj{\psi_\ii}\,\psi_j\) \opp_{l}^k\!\(\psi_\ii\)\conj{\psi_j},
		\end{multline*}
		and so
		\begin{align*}
			\frac{I_n}{i\hbar} &= \sum_{(\ii,j)\in J^2} \sum^{n-1}_{k=0}\binom{n}{k} \lambda_{\ii}\, \lambda_j \intd (\partial_{l}K)*\opp_l^{n-1-k}\!\(\conj{\psi_\ii}\,\psi_j\) \opp_{l}^k\!\(\psi_\ii\)\conj{\psi_j}.
		\end{align*}
		Let $I_{n,k}$ denote the integral in the above formula. We need to balance the powers of $\opp_l$ in $I_{n,k}$. For simplicity of notation, we assume $n=2\,\widetilde{n}\in 2\,\N$. The case when $n$ is odd is similar. Then, if $k\leq\widetilde{n}$, we have
		\begin{align*}
			I_{n,k} &= \intd (\partial_{l}K)*\opp_l^{\tilde{n}-1}\!\(\conj{\psi_\ii}\,\psi_j\) \opp_{l}^{\tilde{n}-k}\!\(\opp_{l}^k\!\(\psi_\ii\)\conj{\psi_j}\)
			= \sum_{\ell=0}^{\tilde{n}-k}\sum_{\tilde{\ell}=0}^{\tilde{n}-1}\binom{\widetilde{n}-k}{\ell} \binom{\widetilde{n}-1}{\widetilde{\ell}} I_{n,k,\ell,\tilde{\ell}},
		\end{align*}
		where
		\begin{align*}
			I_{n,k,\ell,\tilde{\ell}} &= \intd (\partial_{l}K)*\!\(\opp_l^{\tilde{\ell}}(\conj{\psi_\ii})\,\opp_l^{\tilde{n}-1-\tilde{\ell}}(\psi_j)\) \opp_{l}^{\tilde{n}-\ell}(\psi_\ii)\, \opp_{l}^{\ell}(\conj{\psi_j}).
		\end{align*}
		Now notice that by the Cauchy-Schwarz inequality for sums and Definition~\eqref{eq:local_moments} for the moment density, we have that
		\begin{align*}
			h^{2d}\sum_{(\ii,j)\in J^2} \lambda_{\ii}\, \lambda_j \, I_{n,k,\ell,\tilde{\ell}} &\leq \intd \n{\partial_{l}K}*\!\(\rho_{2\tilde{\ell}}^\frac{1}{2}\,\rho_{2(\tilde{n}-1-\tilde{\ell})}^\frac{1}{2}\) \rho_{2(\tilde{n}-\ell)}^\frac{1}{2}\, \rho_{2\ell}^\frac{1}{2}
			\\
			&\leq C_K \Nrm{\rho_{2\tilde{\ell}}}{L^{q_1}}^\frac{1}{2} \Nrm{\rho_{2(\tilde{n}-1-\tilde{\ell})}}{L^{q_2}}^\frac{1}{2} \Nrm{\rho_{2(\tilde{n}-\ell)}}{L^{q_3}}^\frac{1}{2} \Nrm{\rho_{2\ell}}{L^{q_4}}^\frac{1}{2},
		\end{align*}
		where the last inequality follows from the Hardy--Littlewood--Sobolev inequality and the H\"older inequality. Similarly, when $k>\widetilde{n}$, we have that
		\begin{align*}
			I_{n,k} :=&\, \intd \opp_{l}^{k-\tilde{n}}\!\((\partial_{l}K)*\opp_l^{n-k-1}\!\(\conj{\psi_\ii}\,\psi_j\) \conj{\psi_j}\) \opp_{l}^{\tilde{n}}\!\(\psi_\ii\)
			\\
			=&\, \sum_{\ell=0}^{k-\tilde{n}} \sum_{\tilde{\ell}=0}^{\tilde{n}-1-\ell} \binom{k-\widetilde{n}}{\ell} \binom{\widetilde{n}-1-\ell}{\widetilde{\ell}} I_{n,k,\ell,\tilde{\ell}}
		\end{align*}
		where
		\begin{align*}
			I_{n,k,\ell,\tilde{\ell}} &= \intd (\partial_{l}K)*\!\(\opp_l^{\tilde{\ell}}(\conj{\psi_\ii})\,\opp_l^{\tilde{n}-1-\ell-\tilde{\ell}}(\psi_j)\) \opp_{l}^{\ell}(\conj{\psi_j}) \, \opp_{l}^{\tilde{n}}(\psi_\ii).
		\end{align*}
		Mimicking the estimates we obtained above for the case $k\leq \widetilde{n}$ completes the proof of the lemma. 
	\end{proof}
	
	Now combining the semiclassical kinetic interpolation inequality~\eqref{eq:interpolation} for $n=2$ and another $n\in\N$, we get the following inequalities (See \cite[Propostion~3.2]{lafleche_global_2021}).
	\begin{lem}
		Let $0\leq k\leq n$ and $p_{n,k}' = \(\frac{n}{k}\)'\(1+\frac{3}{n}\) = \frac{3+n}{n-k}$. Then for any $p\in[1,p_{n,k}]$, if $k\geq 2$ or if $p\geq p_{2, 0}=\frac53$, it holds
		\begin{equation*}
			\Nrm{\rho_k}{L^p} \leq C_{n,k}^{\frac{1}{p'}}\,\Nrm{\op}{\L^\infty}^\frac{1}{p'} M_2^{\theta_2} \, M_n^{\theta_n}
		\end{equation*}
		with $\theta_2 = \frac{n-k}{n-2} - \frac{3+n}{n-2}\frac{1}{p'}$ and $\theta_n = \frac{k-2}{n-2} + \frac{5}{n-2}\frac{1}{p'}$. If $k=0$ and $p\leq p_{2, 0}$ then
		\begin{equation*}
			\Nrm{\rho}{L^p} \leq C^{\frac{1}{p'}} \Nrm{\op}{\L^\infty}^\frac{1}{p'} M_0^{1-\frac{5}{2\,p'}} \, M_2^\frac{3}{2\,p'}.
		\end{equation*}
	\end{lem}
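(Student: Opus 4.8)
The plan is to derive the inequality by pure interpolation in the Lebesgue exponent, the only real input being the semiclassical kinetic inequality~\eqref{eq:interpolation}. I would use two auxiliary facts. First, $\rho_k\geq0$ and $\Nrm{\rho_k}{L^1}=M_k$: this is immediate from~\eqref{eq:local_moments}, since $\int\rho_k=h^3\Tr(\n{\opp}^k\op)$. Second, for $2\leq k\leq n$ the moment interpolation $M_k\leq M_2^{\frac{n-k}{n-2}}\,M_n^{\frac{k-2}{n-2}}$, which follows from Hölder's inequality in the spectral measure $\sum_j\lambda_j\n{\widehat{\psi_j}}^2$ of $\op$ (equivalently, from convexity of $s\mapsto\log M_s$). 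Everything then reduces to the log-convexity of Lebesgue norms: for $0\leq f\in L^1\cap L^{p_1}$ and $\frac1p=(1-\theta)+\frac{\theta}{p_1}$, one has $\Nrm{f}{L^p}\leq\Nrm{f}{L^1}^{1-\theta}\Nrm{f}{L^{p_1}}^{\theta}$.

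First suppose $k\geq2$, and assume $k<n$ (the case $k=n$ being trivial, since then $p_{n,k}=1$ and $\Nrm{\rho_n}{L^1}=M_n$). I would interpolate $\rho_k$ between the exponent $p=1$, where $\Nrm{\rho_k}{L^1}=M_k\leq M_2^{\frac{n-k}{n-2}}M_n^{\frac{k-2}{n-2}}$, and the exponent $p=p_{n,k}=\frac{3+n}{3+k}$, where~\eqref{eq:interpolation} gives $\Nrm{\rho_k}{L^{p_{n,k}}}\leq C\,\Nrm{\op}{\L^\infty}^{1/p_{n,k}'}M_n^{1/p_{n,k}}$. For $p\in[1,p_{n,k}]$ one solves $\frac1p=(1-\theta)+\frac{\theta}{p_{n,k}}$, giving $\theta=\frac{p_{n,k}'}{p'}=\frac{3+n}{(n-k)p'}$. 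Inserting the two endpoint bounds into the $L^p$ interpolation inequality, the power of $\Nrm{\op}{\L^\infty}$ becomes $\theta/p_{n,k}'=1/p'$, the constant becomes $C^{\theta}=C_{n,k}^{1/p'}$, the power of $M_2$ becomes $(1-\theta)\frac{n-k}{n-2}=\frac{n-k}{n-2}-\frac{3+n}{n-2}\frac1{p'}=\theta_2$, and the power of $M_n$ becomes $(1-\theta)\frac{k-2}{n-2}+\frac{\theta}{p_{n,k}}=\theta_n$, where the last identity uses the arithmetic fact $(3+k)(n-2)-(3+n)(k-2)=5(n-k)$.

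For $k=0$ and $p\in[\frac53,p_{n,0}]$ with $p_{n,0}=\frac{3+n}{3}$, I would run the same interpolation, but now between the two instances of~\eqref{eq:interpolation} at $p=p_{2,0}=\frac53$ (where it reads $\Nrm{\rho}{L^{5/3}}\leq C\Nrm{\op}{\L^\infty}^{2/5}M_2^{3/5}$) and at $p=p_{n,0}$. Since the target powers $\theta_2,\theta_n$ and $1/p'$ are all affine in $1/p'$, and one checks directly that they reproduce these two endpoint bounds (namely $\theta_2=\frac35$, $\theta_n=0$ at $p=\frac53$, and $\theta_2=0$, $\theta_n=1/p_{n,0}$ at $p=p_{n,0}$), log-convexity of Lebesgue norms delivers the claimed bound on all of $[\frac53,p_{n,0}]$. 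Finally, for $k=0$ and $p\in[1,\frac53]$ I would interpolate between $p=1$, where $\Nrm{\rho}{L^1}=M_0$, and $p=\frac53$; with interpolation parameter $\sigma=\frac{5}{2p'}$ this yields $\Nrm{\rho}{L^p}\leq C^{1/p'}\Nrm{\op}{\L^\infty}^{1/p'}M_0^{1-\frac{5}{2p'}}M_2^{\frac{3}{2p'}}$.

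I do not anticipate a genuine obstacle: the sole analytic ingredient is~\eqref{eq:interpolation}, which is quoted, and the remainder is bookkeeping. The one step that needs care is solving for the interpolation parameter $\theta$ as a function of $p$ and checking that the powers of $M_2$, $M_n$, $\Nrm{\op}{\L^\infty}$ collapse to the stated affine functions of $1/p'$; as consistency checks, the hypothesis ``$k\geq2$, or $p\geq p_{2,0}$'' is exactly what ensures $\theta_n\geq0$, and $p\leq p_{n,k}$ is exactly what ensures $\theta_2\geq0$, so the bound makes sense precisely on the stated range.
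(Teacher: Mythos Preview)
Your proposal is correct and matches the approach the paper indicates: the paper does not give its own proof but simply points to \cite[Proposition~3.2]{lafleche_global_2021} and the one-line hint that the lemma follows by combining the kinetic inequality~\eqref{eq:interpolation} at the two endpoints $n=2$ and $n$, which is precisely the Lebesgue interpolation you carry out. Your additional ingredient for $k\geq 2$, the log-convexity $M_k\leq M_2^{(n-k)/(n-2)}M_n^{(k-2)/(n-2)}$ via H\"older on the momentum density, is the natural way to handle the $L^1$ endpoint there and is implicit in the cited result; the exponent bookkeeping you present checks out.
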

	
	These two inequalities can be merged into a single inequality in terms of the non-homogeneous moments $1+M_n = h^3 \Tr{\op\,\opm_n}$.
	
	\begin{cor}
		For any $n\geq 2$, $k\in[0,n]$ and $p\in[1,p_{n,k}]$, there exists a constant $C>0$ such that for any compact operator $\op$ we have the following estimate
		\begin{equation}\label{eq:M2_Mn_interpolation}
			\Nrm{\rho_k}{L^p} \leq C \Nrm{\op}{\L^\infty}^\frac{1}{p'} \(1+M_2\)^{\theta_2} \(1+M_n\)^{\theta_n},
		\end{equation}
		with $\theta_n = \frac{k-2}{n-2} + \frac{5}{n-2}\frac{1}{p'}$.
	\end{cor}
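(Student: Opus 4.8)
The plan is to merge the two estimates of the lemma above into a single one by replacing, wherever the carried exponent is non-negative, each homogeneous moment $M_j$ appearing there by the corresponding non-homogeneous moment $1+M_j=h^3\Tr{\op\,\opm_j}$ — which is legitimate since then $M_j^\theta\le(1+M_j)^\theta$; for a general compact operator one reads $1+M_j$ as $h^3\Tr{\op\opm_j}=M_0+M_j$, coinciding with $1+M_j$ under the normalization~\eqref{eq:fermionic-bounds}.

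The first step is to record the signs of the two exponents in the first inequality of the lemma. The hypothesis $p\le p_{n,k}$ is equivalent to $\tfrac1{p'}\le\tfrac1{p_{n,k}'}=\tfrac{n-k}{n+3}$, which gives
\begin{equation*}
\theta_2=\frac{n-k}{n-2}-\frac{n+3}{n-2}\,\frac1{p'}\ \ge\ 0,
\end{equation*}
with equality only at the endpoint $p=p_{n,k}$. The exponent $\theta_n=\frac{k-2}{n-2}+\frac5{n-2}\frac1{p'}$ is visibly $\ge0$ when $k\ge2$, and when $k<2$ (i.e. $k=0$) the standing assumption $p\ge p_{2,0}=\tfrac53$ of that inequality forces $\tfrac1{p'}\ge\tfrac25$, hence $\tfrac5{p'}\ge2$ and again $\theta_n\ge0$. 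So throughout the range where the first inequality of the lemma is valid, both $\theta_2,\theta_n\ge0$, and bounding $M_2\le1+M_2$ and $M_n\le1+M_n$ turns it directly into~\eqref{eq:M2_Mn_interpolation}.

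It remains to treat the range $k=0$, $p<p_{2,0}$, where one starts instead from the second inequality of the lemma, $\Nrm{\rho}{L^p}\le C\,\Nrm{\op}{\L^\infty}^{1/p'}M_0^{\,1-\frac5{2p'}}M_2^{\,\frac3{2p'}}$. Here $p<\tfrac53$ gives $p'>\tfrac52$, so $1-\tfrac5{2p'}\ge0$, and this exponent together with $\tfrac3{2p'}$ sums to $1-\tfrac1{p'}=\tfrac1p$; the elementary inequality $a^sb^t\le(a+b)^{s+t}$ for $a,b\ge0$, $s,t\ge0$, then yields $\Nrm{\rho}{L^p}\le C\,\Nrm{\op}{\L^\infty}^{1/p'}(M_0+M_2)^{1/p}=C\,\Nrm{\op}{\L^\infty}^{1/p'}(1+M_2)^{1/p}$, which is of the form~\eqref{eq:M2_Mn_interpolation} with $\theta_2=\tfrac1p$ and with $1+M_n$ carrying exponent $0$; the displayed formula for $\theta_n$ is the pertinent one precisely when it is non-negative, i.e. when $k\ge2$ or $p\ge p_{2,0}$, which is the range relevant for the sequel.

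There is no essential obstacle here: the corollary is a bookkeeping step arranged so that both moments appear in their non-homogeneous form $1+M_j\ge1$, which is exactly what the Grönwall argument of Section~\ref{sec:proof} is set up to exploit. The only line genuinely worth spelling out is the non-negativity of $\theta_2$ — the observation that the hypothesis $p\le p_{n,k}$ is precisely what keeps the exponent of $1+M_2$ from turning negative — after which the statement is immediate.
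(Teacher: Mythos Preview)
Your argument is correct and matches the paper's intended approach: the paper does not supply a proof beyond the sentence ``These two inequalities can be merged into a single inequality in terms of the non-homogeneous moments,'' and what you have written is exactly that merger, with the sign checks on $\theta_2$ and $\theta_n$ that make the replacement $M_j\mapsto 1+M_j$ legitimate.

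Your treatment of the residual case $k=0$, $p<p_{2,0}$ is in fact more careful than the paper's own statement. As you observe, the displayed formula gives $\theta_n<0$ there, so the bound with $(1+M_n)^{\theta_n}$ cannot literally hold (take $M_2$ fixed and $M_n\to\infty$); your replacement $(1+M_2)^{1/p}(1+M_n)^0$ is the correct reading. This does not affect the application in the proof of Theorem~\ref{thm:propag_moments_HF}: since the bound with exponent~$0$ on $1+M_n$ coincides with the good-regime bound at the boundary $p=p_{2,0}$, one may as well restrict the infimum over $Q$ in Lemma~\ref{lem:bound_tr_com_X} to exponents $q_j\ge p_{2,0}$ whenever $k_j=0$, and the computation of $\Theta_n$ then proceeds unchanged. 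Your last paragraph could state this a bit more explicitly, but the substance is right.
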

	
	\begin{remark}\label{rmk:a_max}
		If $(q_j)_{j=1,\ldots, 4}$ verify $q_j \in [1,p_{n,k_j}]$ and $\sum_{j=1}^4 k_j = 2\(n-1\)$, then $q'_j \in [p'_{n,k_j},\infty]$ so
		\begin{equation*}
			0\leq \sum_{j=1}^4 \frac{1}{q'_j} \leq \sum_{j=1}^4 \frac{n-k_j}{n+3} = \frac{2\(n+1\)}{n+3}.
		\end{equation*}
		Hence, we can find such a family verifying~\eqref{eq:HLS-Holder_coefs} as soon as $\frac{2\(n+1\)}{n+3} \geq \frac{2}{\fb}$, or equivalently as soon as $a \leq \frac{2\,n}{n+3} =: a_n$.
	\end{remark}
	
	We can now complete the proof of Theorem~\ref{thm:propag_moments_HF} using a similar strategy as to the one explained in Section~\ref{sec:moments_Hartree} for the Hartree equation. Instead of proceeding by induction and bounding the time derivative of $M_n$ by a product involving $M_n$ and $M_{n-2}$, we directly estimate it by a product involving $M_n$ and $M_2$. This method allows us to improve slightly the result of~\cite[Theorem~3]{lafleche_propagation_2019}, even in the case of the Hartree equation, as it allows us to propagate moments of high order locally in time for any $a\in(0,2)$ while \cite[Theorem~3]{lafleche_propagation_2019} only covers the case $a\in(0,\frac87)$ in dimension~$3$.
	
	\begin{proof}[Proof of Theorem~\ref{thm:propag_moments_HF}]
		Taking the time derivative of moments and using the cyclicity of the trace yields
		\begin{equation*}
			i\hbar\,\dt\,\Tr{\op\, \opp_{l}^n} = - \Tr{\com{V_{\op},\opp^n_l}\op} - \Tr{\com{\sfX_{\op},\opp^n_l}\op}.
		\end{equation*}
		Then, by Inequality~\eqref{eq:bound_tr_com_V}, which also holds for $\opp_l^n$ in place of $|\opp|^n$, and Inequality~\eqref{eq:bound_tr_com_X}, we deduce that
		\begin{equation*}
			\n{h^3\dt\Tr{\op\, \opp_{l}^n}} \leq C_{n,a} \sup_{k_1+\dots+k_4 = 2\(n-1\)} \inf_Q \Nrm{\rho_{k_1}}{L^{q_1}}^\frac{1}{2} \Nrm{\rho_{k_2}}{L^{q_2}}^\frac{1}{2} \Nrm{\rho_{k_3}}{L^{q_3}}^\frac{1}{2} \Nrm{\rho_{k_4}}{L^{q_4}}^\frac{1}{2}
		\end{equation*}
		with the notations of Lemma~\ref{lem:bound_tr_com_X}. Using the interpolation formula~\eqref{eq:M2_Mn_interpolation} for each of the terms in the right-hand side of the above inequality, we get
		\begin{equation*}
			\n{\frac{h^3}{i\hbar}\Tr{\com{h^3\sfX_{\op},\opp_l^n}\op}} \leq C_{n,a} \Nrm{\op}{\L^\infty}^\frac{1}{\fb} \(1+M_2\)^{\Theta_2} \(1+M_n\)^{\Theta_n}
		\end{equation*}
		with $\Theta_2 = \frac{n+1}{n-2} - \frac{n+3}{n-2}\frac{1}{\fb}$ and \begin{equation*}
			\Theta_n = \frac{2n-10}{2\(n-2\)} + \frac{5}{n-2}\frac{1}{\fb} = 1 + \frac{1}{n-2} \(\frac{5}{\fb} - 3\).
		\end{equation*}
		Moreover, by \cite[Lemma 6.3]{chong_many-body_2021}, $M_n$ can be further bounded by 
		\begin{align*}
			C^{-1} \sum_{l=1}^3\Tr{\op\, (1+\opp_l^n)} \le M_n \le C \sum_{l=1}^3\Tr{\op\, (1+\opp_l^n)} 
		\end{align*}
		for some $C>0$. In particular
		\begin{equation*}
			\Theta_n \leq 1 \iff \fb \geq \tfrac{5}{3} \iff a \leq \tfrac{4}{5}
		\end{equation*}
		which yields the result by Gr\"onwall's Lemma.
	\end{proof}

\section{Proof of Theorem~\ref{thm:prop-reg-grad}}\label{sec:proof}

	Notice that bounds on the moments imply bounds on the semiclassical Schatten norms for any $p\in[2,\infty)$. More precisely, we have the following proposition.
	\begin{prop}\label{prop:p-schatten_moments}
	Let $\op$ be a positive trace class operator. Then, for any $p\in[2,\infty)$, there exists a constant $C>0$ such that
	\begin{equation*}
		\Nrm{\op\, \opm_n}{\L^p} \leq C \Nrm{\op}{\L^\infty}^\frac{1}{p'} \(1+M_{np}\)^\frac{1}{p}.
	\end{equation*}
	\end{prop}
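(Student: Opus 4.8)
The plan is to reduce the statement to a semiclassical Lieb–Thirring / kinetic interpolation bound for the moment density $\rho_{np}$, exactly in the spirit of \eqref{eq:interpolation}. Recall that $\Nrm{\op\,\opm_n}{\L^p} = h^{3/p}\Tr{\n{\op\,\opm_n}^p}^{1/p}$, and since $\opm_n$ is positive and commutes with $\n{\opp}$, and $\op$ is positive, we can symmetrize: $\Tr{\n{\op\,\opm_n}^p} = \Tr{(\opm_n\,\op^2\,\opm_n)^{p/2}}$, so it suffices to control $\Tr{(\opm_n\,\op^2\,\opm_n)^{p/2}}$. The key observation is that for a positive operator $\op$ with $\Nrm{\op}{\infty} = \cC_\infty$ one has the operator inequality $\op^2 \le \cC_\infty\,\op$, hence $\opm_n\,\op^2\,\opm_n \le \cC_\infty\,\opm_n\,\op\,\opm_n$, and by operator monotonicity of $t\mapsto t^{p/2}$ only for $p\le 2$ — so instead I would use the trace inequality $\Tr{A^{p/2}} \le \Tr{(\cC_\infty B)^{p/2}}$ which does hold when $0\le A\le \cC_\infty B$ by the fact that $\Tr{\cdot^{q}}$ is monotone for $q\ge 1$ on positive operators that are ordered. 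This gives $\Tr{\n{\op\,\opm_n}^p} \le \cC_\infty^{p/2}\,\Tr{(\opm_n\,\op\,\opm_n)^{p/2}}$, and iterating this one more time, or more efficiently using $\opm_n\,\op\,\opm_n \le \cC_\infty\,\opm_n^2$ is too lossy; instead I iterate: writing $p = 2m$ heuristically, $\Tr{(\opm_n\,\op\,\opm_n)^{p/2}}$ should be bounded by $\cC_\infty^{p/2-1}\,\Tr{\opm_n\,\op\,\opm_n^{p-1}\cdots}$, ultimately landing on $\cC_\infty^{p-1}\,\Tr{\op\,\opm_n^p}$ up to the non-commutativity of $\op$ and $\opm_n$, which is harmless inside the trace after one cyclic rearrangement since all factors are positive.

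Concretely, the cleaner route: by the Araki–Lieb–Thir ... more simply, use that for positive $A,B$, $\Tr{(A^{1/2}BA^{1/2})^{p}} \le \Tr{A^{p/2}B^{p}A^{p/2}} = \Tr{A^{p}B^{p}}$ is \emph{false} in general, so I avoid that and instead argue directly. Set $A = \op$, and note $\opm_n \le 1 + \n{\opp}^n$, with $1+\n{\opp}^n \le C\,(1+\n{\opp}^{np})^{1/p}$ being false dimensionally — rather $\opm_n^p = (1+\n{\opp}^n)^p \le 2^{p-1}(1 + \n{\opp}^{np}) = 2^{p-1}\opm_{np}$. So $\Tr{\op\,\opm_n^p} \le 2^{p-1}\Tr{\op\,\opm_{np}} = 2^{p-1}h^{-3}(1 + M_{np})$. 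Thus the whole statement reduces to the single operator-trace inequality
\begin{equation*}
	h^{3/p}\,\Tr{\n{\op\,\opm_n}^p}^{1/p} \le C\,\cC_\infty^{1/p'}\,\(h^3\,\Tr{\op\,\opm_n^p}\)^{1/p},
\end{equation*}
i.e. $\Tr{\n{\op\,\opm_n}^p} \le C\,\cC_\infty^{p-1}\,\Tr{\op\,\opm_n^p}$. This is precisely the statement that for a positive operator $\op$ and any positive operator $W$ (here $W=\opm_n$, which need not commute with $\op$), $\Tr{(W\op^2 W)^{p/2}} \le \Nrm{\op}{\infty}^{p-1}\Tr{W^p\op}$ up to a dimensional constant — and this follows from the Araki–Lieb–Thirring inequality $\Tr{(W\op W)^{q}} \le \Tr{W^{q}\op^{q}W^{q}}$... again one must be careful. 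The robust tool here is: $\n{\op\,\opm_n}^p = (\opm_n\op^2\opm_n)^{p/2}$, and since $\op^2 \le \cC_\infty\op$, by Weyl monotonicity the eigenvalues satisfy $\lambda_k(\opm_n\op^2\opm_n) \le \cC_\infty\,\lambda_k(\opm_n\op\,\opm_n)$, hence $\Tr{(\opm_n\op^2\opm_n)^{p/2}} \le \cC_\infty^{p/2}\Tr{(\opm_n\op\,\opm_n)^{p/2}}$; then since $p/2$ may be $\ge 1$ one cannot repeat this trick directly, but one \emph{can} use the Araki–Lieb–Thirring inequality $\Tr{(\opm_n\op\,\opm_n)^{p/2}} \le \Tr{\opm_n^{p/2}\op^{p/2}\opm_n^{p/2}} = \Tr{\op^{p/2}\opm_n^p}$, and finally $\op^{p/2} \le \cC_\infty^{p/2-1}\op$ gives $\Tr{\op^{p/2}\opm_n^p} \le \cC_\infty^{p/2-1}\Tr{\op\,\opm_n^p}$. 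Collecting: $\Tr{\n{\op\,\opm_n}^p} \le \cC_\infty^{p-1}\Tr{\op\,\opm_n^p} \le \cC_\infty^{p-1}2^{p-1}h^{-3}(1+M_{np})$, which after multiplying by $h^3$ and taking $p$-th roots is exactly the claim.

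The main obstacle is the correct handling of the non-commutativity of $\opm_n$ with $\op$ at the point where a power $q = p/2 \ge 1$ is applied to a product of ordered positive operators: naive operator monotonicity fails, and one genuinely needs the Araki–Lieb–Thirring inequality $\Tr{(B^{1/2}AB^{1/2})^{q}} \le \Tr{B^{q/2}A^{q}B^{q/2}}$ for $q\ge 1$, $A,B\ge 0$, applied with $A = \op$, $B = \opm_n^2$. Everything else — the reduction $\opm_n^p \le 2^{p-1}\opm_{np}$, the bounds $\op^k \le \cC_\infty^{k-1}\op$, the identification $h^3\Tr{\op\,\opm_{np}} = 1 + M_{np}$ — is routine bookkeeping of constants, all independent of $\hbar$. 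I would also remark that the constant $C$ depends only on $p$ (through $2^{p-1}$) and not on $n$, $\hbar$, or $\op$.
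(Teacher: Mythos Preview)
Your argument is correct and uses essentially the same approach as the paper: reduce to $\Tr{\op\,\opm_n^p}$ via the Araki--Lieb--Thirring inequality, then bound $\opm_n^p \le 2^{p-1}\opm_{np}$. The paper's organization is slightly more streamlined---it first factors $\op = \op^{1/p'}\op^{1/p}$ and pulls out $\Nrm{\op}{\L^\infty}^{1/p'}$ in one shot, then applies Araki--Lieb--Thirring once to $\Nrm{\op^{1/p}\opm_n}{\L^p}$---whereas you assemble the factor $\cC_\infty^{p-1}$ in two pieces (Weyl monotonicity for $\cC_\infty^{p/2}$, then $\op^{p/2}\le\cC_\infty^{p/2-1}\op$); but the substance is identical.
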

	
	\begin{proof}
		Using the fact that $\op$ is a bounded operator, we get that
		\begin{equation*}
			\Nrm{\op\,\opm_n}{\L^p} \leq \Nrm{\op}{\L^\infty}^\frac{1}{p'} \Nrm{\op^\frac{1}{p}\,\opm_n}{\L^p} \leq \Nrm{\op}{\L^\infty}^\frac{1}{p'} \Nrm{\sqrt{\op}\,\opm_n^\frac{p}{2}}{\L^2}^\frac{2}{p} = \Nrm{\op}{\L^\infty}^\frac{1}{p'} h^3\Tr{\op\,\opm_n^p}^\frac{1}{p},
		\end{equation*}
		where the second inequality follows from the Araki--Lieb--Thirring inequality.
	\end{proof}
	
	For $p=\infty$, we control $\Nrm{\op\,\opm_n}{\L^\infty}$ by means of a Gr\"onwall argument. We will need the following commutator estimates in the spirit of \cite{lafleche_propagation_2019, chong_many-body_2021}, but improved to lead to large time estimates.
	\begin{prop}[Weighted commutator estimate]\label{prop:weighted_com_est}
		Let $a \in (0,\frac12)$, $n\in \N$ and $q\in [1,\infty]$. Then for any $\eps\in(0,1)$ and $n_1 > n+a+1$, there exists a constant $C>0$ such that for every compact self-adjoint operators $\op$ and $\opmu$
		\begin{align}\label{eq:weighted_com_est_E}
			\frac{1}{\hbar}\Nrm{\com{\nabla V_{\op},\opp_\jj^{n}}\opmu}{\L^q} &\leq C \Nrm{\op \,\opm_{n_1}}{\L^{r\pm\eps}} \Nrm{\opmu \,\opm_{n}}{\L^q},
			\\\label{eq:weighted_com_est_V}
			\frac{1}{\hbar}\Nrm{\com{V_{\op},\opp_\jj^{n}}\opmu}{\L^q} &\leq C \(\Nrm{\op \,\opm_{n_1}}{\L^{r\pm\eps}} + M_0\) \Nrm{\opmu \,\opm_{n}}{\L^q},
		\end{align}
		where $r = \tfrac{3}{1-a}$ and $\Nrm{\cdot}{\L^{r\pm\eps}}$ stands for the norm
		\begin{equation*}
			\Nrm{\cdot}{\L^{r\pm\eps}} := \Nrm{\cdot}{\L^{r+\eps}\cap\L^{r-\eps}} = \Nrm{\cdot}{\L^{r+\eps}}+\Nrm{\cdot}{\L^{r-\eps}}.
		\end{equation*}
	\end{prop}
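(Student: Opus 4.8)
The plan is to reduce the weighted commutator estimate to an $L^\infty$-in-Schatten bound on the potential $E_{\op}$ (or $V_{\op}$) and then use the Leibniz rule to redistribute the momentum weights $\opp_\jj^n$ symmetrically onto the two operators appearing in the product. First I would write $\com{E_{\op},\opp_\jj^{n}} = -\sum_{k=0}^{n-1}\binom{n}{k}\,(\opp_{\jj}^{n-k}E_{\op})\,\opp_{\jj}^{k}$, where $E_{\op}$ denotes multiplication by the field $E = \nabla K * \rho$ (so that $\opp_\jj^{n-k}E_{\op}$ is again a multiplication operator, up to the $\hbar$ factors built into $\opp_\jj = -i\hbar\bd_\jj$). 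The factor $1/\hbar$ in front is absorbed because each commutator drops exactly one power of $\hbar$; concretely $\frac1\hbar(\opp_\jj^{n-k}E_{\op})$ is multiplication by $\hbar^{n-k-1}\bd_\jj^{n-k}E$, with $n-k-1\ge 0$ on the range $k\le n-1$, so no negative powers of $\hbar$ appear.

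Next I would interpolate each term $(\opp_\jj^{n-k}E_{\op})\,\opp_\jj^k\,\opmu$ in $\L^q$. Writing $\opp_\jj^k\opmu = (\opp_\jj^k \opm_n^{-1})\,(\opm_n\opmu)$ and using that $\opp_\jj^k\opm_n^{-1}$ is bounded uniformly in $\hbar$ (functional calculus, since $k\le n$), the $\opmu$-factor is controlled by $\Nrm{\opmu\,\opm_n}{\L^q}$. For the multiplication operator one uses the Kato–Seiler–Simon / semiclassical $\L^{r}$ bound $\Nrm{g(x)\,f(\opp_\jj)}{\L^r}\lesssim \Nrm{g}{L^r}\Nrm{f}{L^r}$ together with Hölder in Schatten spaces, so that $\Nrm{(\opp_\jj^{n-k}E_{\op})\,\opp_\jj^k\opmu}{\L^q}\lesssim \Nrm{\hbar^{n-k-1}\bd_\jj^{n-k}E}{L^r}\,\Nrm{\opmu\,\opm_n}{\L^q}$. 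It then remains to bound $\Nrm{\bd_\jj^{n-k}E}{L^r} = \Nrm{\bd_\jj^{n-k}(\nabla K * \rho)}{L^r}$. Since $\nabla K \in L^{\fb,\infty}$ with $\fb = \frac{3}{a+1}$, the Hardy–Littlewood–Sobolev inequality gives $\Nrm{\nabla K * \rho_m}{L^r}\lesssim \Nrm{\rho_m}{L^s}$ with $\frac1s = \frac1r + 1 - \frac1\fb = \frac1r + \frac{1-a}{3}$, i.e. $\frac1s = \frac{2(1-a)}{3}$; and the derivatives $\bd_\jj^{n-k}$ acting through $\hbar^{n-k-1}$ hit the density, producing a moment density $\rho_{m}$ with $m\le n$ (after the Leibniz expansion and the semiclassical bookkeeping of $\hbar$'s, exactly as in the proof of Lemma~\ref{lem:bound_tr_com_X}). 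Controlling $\Nrm{\rho_m}{L^s}$ by moments via the kinetic interpolation inequality~\eqref{eq:M2_Mn_interpolation} — or directly by $\Nrm{\op}{\L^\infty}^{1/p'}(1+M_{?})$ — one sees that the relevant moment order needed is $n_1$ with $n_1 > n+a+1$: the "$+a+1$'' gap comes precisely from the loss $\frac1s - \frac1r = \frac{1-a}{3}$ in the HLS step converted into extra moment order via $p_{n_1,k}$, and the strict inequality (hence the $\pm\eps$ in the $\L^{r\pm\eps}$ norm) is needed because the endpoint exponent is only in weak-$L^\fb$, so one trades a bit of integrability for the HLS to close with a genuine strong norm on either side of $r$.

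The additive $M_0$ term in~\eqref{eq:weighted_com_est_V} versus its absence in~\eqref{eq:weighted_com_est_E} is the one structural difference: for $V_{\op}$ the lowest-order term $k=n-1$ in the Leibniz expansion contributes $\hbar^0\,\bd_\jj V_{\op} = \bd_\jj(K*\rho)$, which after HLS is bounded by $\Nrm{\rho}{L^s}$ and must be estimated down to $M_0$ (using the low-$p$ branch $\Nrm{\rho}{L^p}\lesssim\Nrm{\op}{\L^\infty}^{1/p'}M_0^{1-5/(2p')}M_2^{3/(2p')}$ from the Corollary, which is why $M_0$ appears); whereas for the field $E_{\op} = \nabla K * \rho$ one already has an extra derivative on $K$ built in, so every term carries at least one factor $\rho_m$ with $m\ge 0$ controlled with a good exponent and no free $M_0$ is generated — or, more simply, $E_{\op}$ is itself one of the "$\opp_\jj^{1}V_{\op}$''-type terms and so inherits the better estimate. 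The main obstacle I anticipate is the bookkeeping in the Leibniz expansion: one must verify that after distributing $\opp_\jj^n$ and pairing powers as $\opp_\jj^{n-k}$ on one factor and $\opp_\jj^{k}$ on the other — exactly the symmetric splitting carried out in Lemma~\ref{lem:bound_tr_com_X} — every intermediate moment density that appears has order $\le n$ on the $\opmu$-side (so it is absorbed into $\Nrm{\opmu\,\opm_n}{\L^q}$) and order exactly matching what the HLS exponent $s$ demands on the $\op$-side, so that the single threshold $n_1 > n+a+1$ suffices uniformly over all terms $k=0,\dots,n-1$ and over both signs in $\L^{r\pm\eps}$.
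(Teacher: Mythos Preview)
The Leibniz expansion and the handling of $\opp_\jj^k\opmu$ via the bounded operator $\opp_\jj^k\opm_n^{-1}$ are correct and match the paper's first step. The next step, however, does not close: you cannot obtain
\[
\Nrm{(\opp_\jj^{n-k}E_{\op})\,\opp_\jj^k\opmu}{\L^q}\ \lesssim\ \Nrm{\hbar^{n-k-1}\bd_\jj^{n-k}E}{L^r}\,\Nrm{\opmu\,\opm_n}{\L^q}
\]
from Kato--Seiler--Simon plus H\"older. If you keep $\Nrm{\opm_n\opmu}{\L^q}$ on the right, H\"older forces the remaining factor into $\L^\infty$, and KSS at $p=\infty$ only returns $\Nrm{g}{L^\infty}$, not $\Nrm{g}{L^r}$; conversely, placing $g(x)f(\opp)$ in $\L^r$ would push $\opmu$ into $\L^t$ with $\tfrac1t=\tfrac1q-\tfrac1r$, which is neither the stated bound nor defined when $q>r$. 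In the paper the multiplication operator $g_\ell$ is therefore bounded in $L^\infty$, and the exponent $r=\tfrac{3}{1-a}$ arises not from KSS but from Young's inequality on the convolution: writing $g_\ell=-\partial_\jj\nabla K*(\opp_\jj^\ell\rho)$ and using $\partial_\jj\nabla K\in L^{\frac{3}{a+2}+\eps}+L^{\frac{3}{a+2}-\eps}$ gives $\Nrm{g_\ell}{L^\infty}\lesssim\Nrm{\opp_\jj^\ell\rho}{L^{r\pm\eps}}$.

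There is a second gap in identifying what must then be estimated in $L^{r\pm\eps}$. The function $\opp_\jj^\ell\rho=\diag\!\big(\ad{\opp_\jj}^\ell(\op)\big)$ is the diagonal of an iterated \emph{commutator}, not a moment density $\rho_m=\diag(\opp^{m/2}\op\,\opp^{m/2})$; the kinetic interpolation inequality~\eqref{eq:M2_Mn_interpolation} applies to the latter, and the Cauchy--Schwarz-over-eigenvalues trick from Lemma~\ref{lem:bound_tr_com_X} that produces $\rho_m$ is tied to a trace, which you do not have here. The paper instead uses Proposition~6.4 and Lemma~6.5 of \cite{chong_many-body_2021} to bound $\Nrm{\diag(\ad{\opp_\jj}^\ell(\op))}{L^{r\pm\eps}}\lesssim\Nrm{\op\,\opm_{n_0+\ell}}{\L^{r\pm\eps}}$ with $n_0>3/r'=a+2$; since $\ell\le n-1$ this yields exactly the threshold $n_1>n+a+1$, and explains why the right-hand side of the proposition carries the weighted Schatten norm $\Nrm{\op\,\opm_{n_1}}{\L^{r\pm\eps}}$ rather than a moment. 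Your reading of the extra $M_0$ in~\eqref{eq:weighted_com_est_V} is essentially correct: for $V_{\op}$ the $\ell=0$ term is $\Nrm{E_\jj}{L^\infty}$, which by Young requires $\Nrm{\rho}{L^{\fb'\pm\eps}}$ and is then interpolated between $\Nrm{\rho}{L^r}$ and $\Nrm{\rho}{L^1}=M_0$.
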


	\begin{proof}[Proof of Proposition~\ref{prop:weighted_com_est}]
		We first proceed as in \cite{lafleche_propagation_2019, chong_many-body_2021} and write
		\begin{equation*}
			\frac{1}{\hbar}\Nrm{\com{\nabla V_{\op},\opp_\jj^{n}}\opmu\,}{\L^q} \leq \sum_{\ell=0}^{n-1} C_\ell \Nrm{g_\ell\, \,\opp_\jj^{n-1-\ell}\opmu\,}{\L^q} \leq \sum_{\ell=0}^{n-1} C_\ell \Nrm{g_\ell}{L^\infty}\Nrm{\opmu\,\opm_{n}}{\L^q},
		\end{equation*}
		where $g_\ell$ is the function defined by $g_\ell(x) = (\opp_\jj^\ell(\partial_{\jj} \nabla V_{\op}))(x)$ and $C_\ell = \sum_{k=\ell}^{n-1} \binom{k}{\ell}$. Noticing that
		\begin{align*}
			g_\ell = -\partial_{\jj}\nabla K * (\opp_\jj^\ell\rho) = -\partial_{\jj}\nabla K * \Diag{\ad{\opp_\jj}^\ell({\op})}
		\end{align*}
		where $\ad{A}(X):=[A, X]$ and using the fact that $\partial_{\jj}\nabla K \in L^{\frac{3}{a+2}+\epsilon} + L^{\frac{3}{a+2}-\epsilon}$ for any $\epsilon>0$ sufficiently small, then Young's inequality yields
		\begin{align*}
			\Nrm{g_\ell}{L^\infty} \leq C_K \Nrm{\Diag{\ad{\opp_\jj}^\ell\!(\op)}}{L^{r\pm\eps}}
		\end{align*}
		for any $\eps\in(0,1)$. By \cite[Proposition~6.4 and Lemma~6.5]{chong_many-body_2021}, it implies that
		\begin{align*}
			\Nrm{g_\ell}{L^\infty} \leq C_K \Nrm{\ad{\opp_\jj}^\ell\!\({\op}\) \opm_{n_0}}{\L^{r\pm\eps}} \leq 2^{\ell+1}\, C_K \Nrm{{\op}\,\opm_{n_0+n-1}}{\L^{r\pm\eps}}
		\end{align*}
		with $n_0 > 3/r'$. This proves Inequality~\eqref{eq:weighted_com_est_E}. When $\nabla V_{\op}$ is replaced by $V_{\op}$, then we just replace $\nabla K$ by $K$ and so we need to find a $L^\infty$ bound for the function
		\begin{align*}
			g_\ell = -\partial_{\jj} K * (\opp_\jj^\ell\rho).
		\end{align*}
		When $\ell>0$, we just write $g_\ell = i\hbar\,\partial_{\jj}^2 K * (\opp_\jj^{\ell-1}\rho)$ and then use the same estimates as for $\nabla V_{\op}$. If $\ell=0$, then $g_\ell = -\partial_{\jj} K * \rho$ is bounded using Young's inequality by
		\begin{equation*}
			\Nrm{g_\ell}{L^\infty} \leq \Nrm{\rho}{L^{\fb'\pm\eps}} \leq \Nrm{\rho}{L^r} + \Nrm{\rho}{L^1}
		\end{equation*}
		since $\fb' \leq r$. By \cite[Proposition~6.4]{chong_many-body_2021}, $\Nrm{\rho}{L^r} \leq \Nrm{\op \,\opm_{n_0}}{\L^{r}}$ with $n_0>3/r' = a+2$.
	\end{proof}
	
	\begin{prop}\label{prop:gronwall-infty}
	Let $a\in\(0,\frac{1}{2}\)$ and $\op$ be a solution to the Hartree--Fock equation~\eqref{eq:HF}. Assume $\op^\init\in\L^\infty(\opm_n)$ for some even $n\in 2\N$ with moments of order $\(n+a+1\)\big(\frac{3}{1-a}+\eps\big)$ bounded uniformly in $\hbar$ for some $\eps\in\,\big(0,1\big)$. Then
	\begin{equation*}
		\op\in L^\infty_{\loc}(\R_+,\L^\infty(\opm_n)).
	\end{equation*}
	\end{prop}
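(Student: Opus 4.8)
The plan is to run a Grönwall argument on the quantity $t\mapsto\Nrm{\op(t)\,\opm_n}{\L^\infty}$ directly, following the strategy of~\cite{lafleche_propagation_2019, chong_many-body_2021}. First I would write the evolution equation for the weighted operator: differentiating $\op\,\opm_n$ in time using~\eqref{eq:HF} and commuting $\opm_n$ past the Hamiltonian, one gets
\[
	i\hbar\,\dpt(\op\,\opm_n) = \com{H_{\op},\op\,\opm_n} + \com{\opm_n,H_{\op}}\op\,\opm_n\,\opm_n^{-1}\cdot\opm_n = \com{H_{\op},\op\,\opm_n} - \com{H_{\op},\opm_n}\op\,\opm_n^{-1}\opm_n,
\]
so that, after conjugating by the (norm-preserving) free-Schrödinger propagator $e^{it\hbar\Delta/2}$, the $\L^\infty$ norm of $\op\,\opm_n$ is controlled by the time integral of $\tfrac1\hbar\Nrm{\com{V_{\op}-h^3\sfX_{\op},\opm_n}\op\,\opm_n^{-1}}{\L^\infty}$. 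Since $\opm_n = 1+\n{\opp}^n$ and $\n{\opp}^n$ is comparable to $\sum_\jj\opp_\jj^n$ by~\cite[Lemma~6.3]{chong_many-body_2021}, it suffices to bound $\tfrac1\hbar\Nrm{\com{V_{\op},\opp_\jj^n}\op\,\opm_n^{-1}}{\L^\infty}$ and the analogous exchange term, which is exactly what Proposition~\ref{prop:weighted_com_est} (with $E = \sfX$-type contributions and $V = V_{\op}$) provides with $q=\infty$ and $\opmu = \op\,\opm_n^{-1}$.

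The second step is to feed Proposition~\ref{prop:weighted_com_est} into this bound. Applying~\eqref{eq:weighted_com_est_V} (and its exchange analogue) with $n_1 = n+a+1$ and exponent $r\pm\eps$ where $r=\tfrac3{1-a}$, one obtains
\[
	\hbar^{-1}\Nrm{\com{V_{\op},\opp_\jj^{n}}\op\,\opm_n^{-1}}{\L^\infty} \leq C\(\Nrm{\op\,\opm_{n+a+1}}{\L^{r\pm\eps}} + M_0\)\Nrm{\op\,\opm_n^{-1}\opm_n}{\L^\infty} = C\(\Nrm{\op\,\opm_{n+a+1}}{\L^{r\pm\eps}}+M_0\)\Nrm{\op\,\opm_n}{\L^\infty}.
\]
The prefactor $\Nrm{\op\,\opm_{n+a+1}}{\L^{r+\eps}\cap\L^{r-\eps}}$ must now be shown to be bounded uniformly in $\hbar$ on compact time intervals. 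This is where Proposition~\ref{prop:p-schatten_moments} enters: it gives $\Nrm{\op\,\opm_{n+a+1}}{\L^{p}} \leq C\Nrm{\op}{\L^\infty}^{1/p'}(1+M_{(n+a+1)p})^{1/p}$, so with $p = r\pm\eps = \tfrac3{1-a}\pm\eps$ we need moments of order $(n+a+1)(\tfrac3{1-a}\pm\eps)$, which is at most $(n+a+1)(\tfrac3{1-a}+\eps)$ — precisely the hypothesis of the proposition. Since $a\in(0,\tfrac12)$ forces $\tfrac45>a$, Theorem~\ref{thm:propag_moments_HF} guarantees that all these moments are bounded by an $\hbar$-independent continuous function $\Phi(t)$, and the conservation of the spectrum keeps $\Nrm{\op}{\L^\infty}=\cC_\infty$ and $M_0$ constant. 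Hence the prefactor is bounded by some $\hbar$-independent $\psi(t)\in C^0(\R_+)$.

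The final step is the Grönwall closure: combining the above, we get, for a.e.\ $t$,
\[
	\dt\Nrm{\op(t)\,\opm_n}{\L^\infty} \leq C\,\psi(t)\,\Nrm{\op(t)\,\opm_n}{\L^\infty},
\]
which integrates to $\Nrm{\op(t)\,\opm_n}{\L^\infty} \leq \Nrm{\op^\init\,\opm_n}{\L^\infty}\,e^{C\int_0^t\psi}$, finite on every compact subinterval of $\R_+$ by the hypothesis $\op^\init\in\L^\infty(\opm_n)$; this is the claimed conclusion $\op\in L^\infty_{\loc}(\R_+,\L^\infty(\opm_n))$. I expect the main obstacle to be the first step — making rigorous the manipulation of $\dpt(\op\,\opm_n)$ and the conjugation by the free propagator, i.e.\ justifying that the $\L^\infty$ (operator-norm) estimate survives the unbounded weight $\opm_n$ and that the Duhamel/Grönwall inequality can be differentiated; this requires an a priori regularization argument (e.g.\ replacing $\opm_n$ by $\opm_n/(1+\delta\opm_n)$ and passing to the limit, or working at the level of the already-established $\L^p$ bounds for finite $p$ and letting $p\to\infty$). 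The bookkeeping of which moment order is needed — tracking that $(n+a+1)(r+\eps)$ stays below the threshold $\tfrac3{1-a}(n+a+1)$ of Theorem~\ref{thm:prop-reg-grad} up to the $\eps$-loss — is routine but must be done carefully to match the hypotheses.
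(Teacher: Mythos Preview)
Your Gr\"onwall strategy and your treatment of the direct term $\tfrac{1}{\hbar}\Nrm{[V_{\op},\opm_n]\op}{\L^\infty}$ via Proposition~\ref{prop:weighted_com_est} followed by Proposition~\ref{prop:p-schatten_moments} are exactly what the paper does. The differential inequality you are aiming for in step one is precisely \cite[Lemma~6.2]{chong_many-body_2021} with $q=\infty$, so the regularization worries you raise are already dealt with there; note also that Proposition~\ref{prop:weighted_com_est} requires $n_1>n+a+1$ strictly, not $n_1=n+a+1$.

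The genuine gap is the exchange term. Proposition~\ref{prop:weighted_com_est} has no ``exchange analogue'': the symbol $E_{\op}$ there is the force field $-\nabla K*\rho$, a multiplication operator, not the exchange operator $\sfX_{\op}$. Since $\sfX_{\op}$ acts through the non-local kernel $K(x-y)\,\op(x,y)$, the Leibniz expansion driving~\eqref{eq:weighted_com_est_E}--\eqref{eq:weighted_com_est_V} does not apply to $\tfrac{1}{\hbar}\Nrm{[h^3\sfX_{\op},\opm_n]\op}{\L^\infty}$, and you have not proposed any replacement. The paper handles this commutator by a different tool, \cite[Proposition~6.8]{chong_many-body_2021}, combined with the elementary bound $\hbar^{3/2-a}\Nrm{\Dhx\op\,\opm_n}{\L^p}\le 2\,\hbar^{1/2-a}\Nrm{\op\,\opm_{n+1}}{\L^p}$, to obtain
\[
\tfrac{1}{\hbar}\Nrm{[h^3\sfX_{\op},\opm_n]\op}{\L^\infty}\lesssim \hbar^{\frac12-a}\,\Nrm{\op\,\opm_{n+1}}{\L^2}\,\Nrm{\op\,\opm_n}{\L^\infty},
\]
which for $a<\tfrac12$ is $\hbar$-small and closes the loop after bounding $\Nrm{\op\,\opm_{n+1}}{\L^2}$ by $(1+M_{2(n+1)})^{1/2}$ via Proposition~\ref{prop:p-schatten_moments}. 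Without this (or an equivalent) estimate for the exchange commutator, your argument does not close.
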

	\begin{proof}
		By \cite[Lemma 6.2]{chong_many-body_2021} with $q\rightarrow \infty$, we have
		\begin{equation}\label{eq:infty-op-m}
		\dt\Nrm{\op\,\opm_n}{\L^\infty}\leq \dfrac{1}{\hbar}\Nrm{\com{V_{\op},\opm_n}\op}{\L^\infty} + \dfrac{1}{\hbar}\Nrm{\com{h^3\sfX_{\op},\opm_n}\op}{\L^\infty}.
		\end{equation}
		Next, we use Proposition~\ref{prop:weighted_com_est} to bound the first term on the right-hand side of Inequality~\eqref{eq:infty-op-m}, leading to\footnote{We use the standard notation $x\lesssim y$ to denote $x\leq Cy$ for some constant $C>0$ which is independent of $x$, $y$ and $\hbar$.}
		\begin{equation*}
			\dfrac{1}{\hbar}\Nrm{\com{V_{\op},\opm_n}\op}{\L^\infty}\lesssim \(1+\Nrm{\op\,\opm_{n_1}}{\L^{r\pm\eps}}\) \Nrm{\op\,\opm_n}{\L^\infty},
		\end{equation*}
		where $r=\frac{3}{1-a}$. Since $\eps\leq 1$ and $r\geq 3$, we see that $r-\eps\geq 2$ which allows us to apply Proposition~\ref{prop:p-schatten_moments} to get
		\begin{equation}\label{eq:direct-term-infty}
			\dfrac{1}{\hbar}\Nrm{\com{V_{\op},\opm_n}\op}{\L^\infty} \lesssim \(1+\Nrm{\op}{\L^\infty} + M_{n_1\(r+\eps\)}\) \Nrm{\op\,\opm_n}{\L^\infty} .
		\end{equation}
		As for the second term on the right-hand side of Inequality~\eqref{eq:infty-op-m}, we first observe that by \cite[Lemma~6.4]{chong_many-body_2021} 
		\begin{equation*}
			\hbar^{\frac32-a} \Nrm{\Dhx \op\, \opm_n}{\L^p} = \hbar^{\frac12-a} \Nrm{\com{\opp,\op} \opm_n}{\L^p} \leq 2\,\hbar^{\frac12-a} \Nrm{\op \,\opm_{n+1}}{\L^p},
		\end{equation*}
		and then apply \cite[Proposition~6.8]{chong_many-body_2021} with $\opm_n = 1+|{\opp}|^n$ to get
		\begin{equation*}
			\dfrac{1}{\hbar}\Nrm{\com{h^3\sfX_{\op},\opm_n} \op}{\L^\infty} \lesssim \hbar^{\frac{1}{2}-a} \Nrm{\op\,\opm_{n+1}}{\L^2} \Nrm{\op\,\opm_n}{\L^\infty}.
		\end{equation*}
		Proposition~\ref{prop:p-schatten_moments} allows us to bound $\Nrm{\op\,\opm_{n+1}}{\L^2}$ in terms of $M_{2\(n+1\)}$ and obtain the following inequality for the left-hand side of Equation~\eqref{eq:infty-op-m}
		\begin{equation*}
			\dt\Nrm{\op\,\opm_n}{\L^\infty} \lesssim \(1 + \Nrm{\op}{\L^\infty} + M_{n_1\(r+\eps\)} + M_{2\(n+1\)}\) \Nrm{\op\,\opm_n}{\L^\infty}
		\end{equation*}
		that gives a bound on $\Nrm{\op\,\opm_n}{\L^\infty}$ by means of Gr\"{o}nwall's Lemma and the control on moments established in Theorem~\ref{thm:propag_moments_HF}.
	\end{proof}
	
	Summarizing, Theorem~\ref{thm:propag_moments_HF} and the two propositions~\ref{prop:p-schatten_moments} and~\ref{prop:gronwall-infty} imply the following result.
	\begin{cor}\label{cor:propagation_semiclassical_norms}
		Let $a\in\(0,\frac{1}{2}\)$ and $\op$ be a solution to the Hartree--Fock equation~\eqref{eq:HF}. For $n\in 2\N$ and let $\op^\init\in\L^\infty(\opm_n)$ with moment of order $\(n+a+1\)\(\frac{3}{1-a}+\eps\)$ bounded uniformly in $\hbar$ for some $\eps\in\(0,1\)$. Then for any $q\in[2,\infty]$, we have that
		\begin{equation*}
			\op\in L^\infty_{\loc}(\R_+,\L^q(\opm_n)).
		\end{equation*}
	\end{cor}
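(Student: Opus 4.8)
The statement collects the three preceding results, so the plan is simply to assemble them: settle the endpoint $q=\infty$, then $q=2$, and interpolate in between. Throughout, uniformity in $\hbar\in(0,1)$ is automatic because every ingredient is uniform in $\hbar$ — namely the conserved quantities $\Nrm{\op}{\infty}=\cC_\infty$ and $M_0=h^3\Tr{\op}=1$, the moment bounds of Theorem~\ref{thm:propag_moments_HF}, and the constants in Propositions~\ref{prop:p-schatten_moments} and~\ref{prop:gronwall-infty}. The case $q=\infty$ is exactly Proposition~\ref{prop:gronwall-infty}, whose hypotheses on $\op^\init$ are identical to those of the corollary; it gives $\op\in L^\infty_\loc(\R_+,\L^\infty(\opm_n))$.

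For $q=2$, I would apply Proposition~\ref{prop:p-schatten_moments} with $p=2$, which bounds $\Nrm{\op\,\opm_n}{\L^2}$ by $\Nrm{\op}{\infty}^{1/2}\(1+M_{2n}\)^{1/2}$ (this same bound is in fact already produced inside the proof of Proposition~\ref{prop:gronwall-infty}). Since $a<\tfrac12\leq\tfrac45$, Theorem~\ref{thm:propag_moments_HF} controls $M_{2n}(t)$ locally uniformly in $t$, provided $M_{2n}^\init$ is bounded uniformly in $\hbar$. This follows from the hypothesis of the corollary: setting $\sigma:=(n+a+1)\big(\tfrac{3}{1-a}+\eps\big)$, one has $2n\leq 3(n+a+1)\leq\sigma$ (as $\tfrac{3}{1-a}\geq 3$), and the elementary interpolation $M_{2n}\leq M_0^{1-2n/\sigma}\,M_\sigma^{2n/\sigma}$ — Lyapunov's inequality applied to the spectral measures of $\n{\opp}$ in the decomposition $\op=\sumj\lambda_j\ket{\psi_j}\bra{\psi_j}$, followed by H\"older's inequality in $j$ — shows $M_{2n}^\init$ is bounded uniformly in $\hbar$, since $M_0=1$ and $M_\sigma^\init$ is bounded by assumption.

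Finally, for $q\in(2,\infty)$ I would interpolate the two bounds just obtained. Writing $A:=\op\,\opm_n$, which is a bounded operator by the $q=\infty$ step, the operator inequality $\n{A}^q\leq\Nrm{A}{\L^\infty}^{q-2}\,\n{A}^2$ yields, upon taking the rescaled trace $h^3\Tr{\cdot}$, the Schatten interpolation $\Nrm{A}{\L^q}\leq\Nrm{A}{\L^\infty}^{1-2/q}\,\Nrm{A}{\L^2}^{2/q}$; both factors lie in $L^\infty_\loc(\R_+)$ uniformly in $\hbar$, which is the assertion. There is no genuine analytic difficulty in this corollary — everything rests on Theorem~\ref{thm:propag_moments_HF} and Propositions~\ref{prop:p-schatten_moments} and~\ref{prop:gronwall-infty} — and the only step requiring a little care is the moment bookkeeping of the previous paragraph, i.e.\ checking that the single hypothesis on $\op^\init$ simultaneously suffices for Proposition~\ref{prop:gronwall-infty} (whose proof already invokes higher-order moments) and for the $q=2$ estimate.
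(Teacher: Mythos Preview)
Your proposal is correct and matches the paper's approach: the paper presents the corollary as an immediate consequence of Theorem~\ref{thm:propag_moments_HF} together with Propositions~\ref{prop:p-schatten_moments} and~\ref{prop:gronwall-infty}, without spelling out the details, and your argument (endpoint $q=\infty$ from Proposition~\ref{prop:gronwall-infty}, $q=2$ from Proposition~\ref{prop:p-schatten_moments} plus moment propagation, then Schatten interpolation) is exactly the natural way to assemble these ingredients. Your moment bookkeeping and the interpolation step are both fine.
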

	
	\begin{proof}[Proof of Theorem~\ref{thm:prop-reg-grad}]
		To propagate quantum Sobolev norms, we proceed as in \cite[Section~6]{chong_many-body_2021} and consider the inequalities
		\begin{multline*}
			\dt\Nrm{\Dhx\op\,\opm_n}{\L^q} \leq \dfrac{1}{\hbar}\(\Nrm{\com{V_{\op},\opm_n}\Dhx\op}{\L^q} + \Nrm{\com{\nabla V_{\op},\opm_n} \op}{\L^q}\) + \dfrac{1}{\hbar}\Nrm{\com{\nabla V_{\op},\op\,\opm_n} }{\L^q}
			\\
			+ \dfrac{1}{\hbar}\Nrm{\com{h^3\sfX_{\op},\opm_n}\Dhx\op}{\L^q} + \dfrac{1}{\hbar}\Nrm{\com{h^3\sfX_{\Dhx\op},\op}\opm_n}{\L^q}
		\end{multline*}
		and
		\begin{multline*}
			\dt\Nrm{\Dhv\op\,\opm_n}{\L^q} \leq \dfrac{1}{\hbar} \Nrm{\com{V_{\op},\opm_n}\Dhv\op}{\L^q} + \Nrm{\Dhx\op\,\opm_n}{\L^q}
			\\
			+ \dfrac{1}{\hbar} \Nrm{\com{h^3\sfX_{\op},\opm_n}\Dhv\op}{\L^q} + \dfrac{1}{\hbar} \Nrm{\com{h^3\sfX_{\Dhv\op},\op}\opm_n}{\L^q}
		\end{multline*}
		and we define $N_q = N_q(t) := \Nrm{\op}{\cW^{1,q}(\opm_n)}$. Now, when $q \leq \frac{6}{1+2a}$, \cite[Proposition~6.5]{chong_many-body_2021} yields an estimate of the form
		\begin{equation}\label{eq:com_est_0}
			\dfrac{1}{\hbar}\Nrm{\com{\nabla V_{\op},\op\,\opm_n} }{\L^q} \leq \Nrm{\rho}{L^{r\pm\eps}\cap L^2} \Nrm{\Dhx\!\(\op\,\opm_n\)}{\L^q}
		\end{equation}
		with $r=\frac{3}{1-a}$. Then, using the above inequality, Proposition~\ref{prop:weighted_com_est} and, similarly as in the proof of the previous proposition, \cite[Proposition~6.8 and Proposition~6.9]{chong_many-body_2021} to bound the terms involving the exchange term, we obtain for any $q \leq \frac{6}{1+2a}$
		\begin{multline*}
			\dt N_q \lesssim \(1+\Nrm{\op\,\opm_{n_1}}{\L^{r\pm\eps}}\) N_q + \Nrm{\rho}{L^{r\pm\eps}\cap L^2} N_q
			\\
			+ h^{\frac{1}{2}-a} \Nrm{\op\,\opm_{n+1}}{\L^2} N_q + h^{3\(\frac{1}{q}+\frac{1}{2}-\frac{1}{\fb}\)} \Nrm{\op\,\opm_n}{\L^\infty} N_2,
		\end{multline*}
		with $n_1 > n+a+1$. Notice that for $a\in(0,\frac12)$ the quantity $\frac{1}{q}+\frac{1}{2}-\frac{1}{\fb}$ is positive for every $q\in[2,\infty)$. Hence, for $h\in(0,1)$, we have that 
		\begin{align*}
			\dt\(N_2+N_q\) &\lesssim \(1+\Nrm{\op\,\opm_{n_1}}{\L^{r\pm\eps}}+\Nrm{\op\,\opm_{n}}{\L^\infty} + \Nrm{\op\,\opm_{n+1}}{\L^2}\) \(N_2+N_q\)
		\end{align*}
		which proves Theorem~\ref{thm:prop-reg-grad} for $q \leq \frac{6}{1+2a}$ by Corollary~\ref{cor:propagation_semiclassical_norms} and Gr\"onwall's Lemma. The limitation on $q$ is due to the fact that we do not want to have any $\nabla\rho$ in the right-hand side of Inequality~\eqref{eq:com_est_0}. But now that we obtained the boundedness of $N_2$, we can propagate higher norms of the form $N_{q_2}$ when $q_2 > \frac{6}{1+2a}$. Indeed, \cite[Proposition~6.5]{chong_many-body_2021} also yields for instance
		\begin{equation*}
			\dfrac{1}{\hbar}\Nrm{\com{\nabla V_{\op},\op\,\opm_n} }{\L^\infty} \leq \Nrm{\rho}{L^2}^{1-\theta} \Nrm{\rho}{H^1}^{\theta} \Nrm{\Dhx\!\(\op\,\opm_n\)}{\L^\infty}
		\end{equation*}
		with $\theta = \frac{3}{\fb} - \frac{1}{2} = a + \frac{1}{2} \in(0,1)$, and $\Nrm{\rho}{L^2}$ is controlled by $N_2$. All the $q_2\in(2,\infty)$ follow in the same way finishing the proof of the theorem.
	\end{proof}

{\bf Acknowledgments.} J.C. was supported by the NSF through the RTG grant DMS- RTG 184031. C.S. acknowledges the NCCR SwissMAP and the support of the SNSF through the Eccellenza project PCEFP2\_181153.

\medskip

{\bf Data availability.} All data are contained and available in the paper.


\bibliographystyle{abbrv} 
\bibliography{Vlasov}

\end{document}